\documentclass[a4paper,12pt]{amsart}
\usepackage{amsmath}
\usepackage{amssymb}
\usepackage{amsfonts}
\usepackage{amsthm}
\usepackage{mathtools}
\usepackage{mathrsfs}
\usepackage{bm}
\usepackage{microtype}
\usepackage{enumitem}
\usepackage{graphicx}
\usepackage{xcolor}
\usepackage{booktabs}
\usepackage{array}
\usepackage{multirow}
\usepackage{indentfirst}
\usepackage{mathrsfs,eucal,dsfont,verbatim}
\usepackage{algorithm}
\usepackage{algpseudocode}
\usepackage{diagbox}
\usepackage{float}
\allowdisplaybreaks

\usepackage{hyperref}
\hypersetup{
    colorlinks=true,
    linkcolor=blue,
    citecolor=blue,
    urlcolor=blue
}
\usepackage[a4paper, margin=1in]{geometry}
\usepackage{setspace}
\usepackage{icomma}
\numberwithin{equation}{section}

\DeclareMathOperator{\Var}{Var}
\DeclareMathOperator{\Cov}{Cov}

\newcommand{\E}[1]{\mathbb{E}\left[#1\right]}
\newcommand{\Prob}[1]{\mathbb{P}\left(#1\right)}
\newcommand{\Real}{\mathbb{R}}

\newcommand{\eps}{\varepsilon}
\newcommand{\tN}{\widetilde{N}}
\newcommand{\I}{\mathds 1}
\newcommand{\Ff}{\mathcal{F}}
\newcommand{\Ee}{\mathds{E}}
\newcommand{\Ww}{\mathcal{W}}
\newcommand{\Pp}{\mathcal{P}}

\newtheorem{theorem}{Theorem}[section]
\newtheorem{lemma}[theorem]{Lemma}

\newtheorem{corollary}[theorem]{Corollary}
\newtheorem{assumption}{Assumption}

\theoremstyle{definition}

\theoremstyle{remark}
\newtheorem{remark}{Remark}[section]

\usepackage{comment}
\usepackage{color}

\usepackage[utf8]{inputenc}
\usepackage[backend=biber, style=alphabetic, sorting=nyt]{biblatex}
\addbibresource{bibliography.bib}

\usepackage{makecell}
\usepackage{caption}

\onehalfspacing

\begin{document}

\title{Strong Convergence Rates for Euler Schemes of L\'evy-Driven SDE using Dynamic Cutting}

\author[D. Platonov]{Denis Platonov}
\address[D. Platonov]{Kyiv T. Shevchenko University\\ Department of Mechanics and Mathematics\\ Acad. Glushkov Ave., 02000,  Kyiv, Ukraine\\
    \texttt{Email: $dplatonov$@$knu.ua$} }

\author[V. Knopova]{Victoria Knopova}
\address[V. Knopova]{Kyiv T. Shevchenko University\\ Department of Mechanics and Mathematics\\ Acad. Glushkov Ave., 02000,  Kyiv, Ukraine\\ \texttt{Email: $vicknopova$@$knu.ua$}}

\begin{abstract}
We establish strong $L^p$ convergence rates for the Euler--Maruyama scheme of L\'evy-driven SDEs. Our approach introduces a novel dynamic cutting (DC) technique that adaptively truncates the L\'evy measure via a time-dependent threshold $\tau^\pm((sh)^\eps)$ linked to the discretization step $h\sim 1/n$. This dynamic threshold contrasts with the fixed one used in the Asmussen–Rosi\'nski (AR) method. Based on this separation, our scheme simulates large jumps via an inhomogeneous compound Poisson process, while small jumps are either omitted or approximated by a Gaussian term. We derive rigorous strong $L^p$ error bounds under suitable conditions on the SDE coefficients and L\'evy measure. Numerical experiments using truncated stable-like processes validate our theory and demonstrate that dynamic cutting, especially with the Gaussian approximation, achieves superior accuracy compared to the AR method. A key reason is controlling the expected number of simulated large jumps at an $O(n)$ rate versus AR's $O(n^\alpha)$ rate, making the method efficient when the jump activity is high (larger $\alpha$).
\end{abstract}

\subjclass[2000]{\emph{Primary:} 60H10. \emph{Secondary:}  60H35; 60G51.}

\keywords{L\'evy-driven SDE, Euler-Maruyama scheme, dynamic cutting}

\date{\today}
\maketitle
\tableofcontents

\section{Introduction }

Consider a L\'evy-driven SDE of the type
\begin{equation}\label{BM_SDE}
    X_t = X_0 + \int_0^t a(s, X_s) \, ds + \int_0^t b(s, X_s) \, dB_s + \int_0^t \!\!\int_{-\infty}^\infty c(s, X_{s-}, z) \tN (ds, dz),
\end{equation}
where $\tN(ds,dz)$ is the compensated  Poisson jump measure corresponding to a L\'evy process $Z_t$, and $a(s,x)$, $b(s,x)$, $c(s,x,z)$ are some measurable functions, conditions on which we specify below.

Protter and Talay \cite{PT97} initiated the study of the Euler scheme for such SDEs by establishing weak convergence rates. In particular, they showed that, under suitable conditions on a function $g$, the weak error $\E{g(X_T)} - \E{g(X_T^n)}$ admits an asymptotic expansion in powers of $n^{-1}$. Later, Jacod \cite{J04} extended this framework by proving limit theorems for Euler approximations and studying the asymptotic behavior of the normalized error process $u_n(X_t - X_t^n)$ for an appropriate scaling sequence $u_n \to \infty$.
 Considerable focus has been placed on SDEs driven by $\alpha$-stable processes. For $\alpha \in [1,2)$, strong convergence rates for the Euler--Maruyama approximation have been derived in \cite{MPT16}, \cite{MX18}, and \cite{KS19}. To extend these results, Li and Zhao \cite{YZ24} derived strong $L^p$-error bounds  (for all $p>0$) for the full range $\alpha \in (0,2)$. In parallel, Butkovsky et al. \cite{But24} investigated the strong convergence of the Euler--Maruyama scheme for multidimensional SDEs with $\beta$-H\"older drift ($\beta>0$) driven by an $\alpha$-stable L\'evy processes with $\alpha \in (0,2]$. Their work established strong $L^p$-error bounds that are independent of $p$ for $\alpha \in (2/3,2]$ and demonstrated the optimality of these rates.

From a practical standpoint, simulation of such SDEs is challenging because the exact distribution of the L\'evy increments is generally unknown except for some special cases. Moreover, L\'evy processes can exhibit infinitely many small jumps in any given finite interval, making direct simulation infeasible. Such a behavior is known as infinite activity:
\begin{equation}\label{inf_act}
    \int_{\Real} \nu(dz) = +\infty.
\end{equation}

For the processes with finite activity, simulation reduces to that of a compound Poisson process; however, infinite activity requires alternative approaches. Asmussen and Rosi\'nski \cite{AR01} addressed this issue by replacing the cumulative effect of small jumps (those below a fixed threshold $\eps$) with a Gaussian term whose variance matches that of the truncated small jumps. Based on this idea, Bossy and Maurer \cite{BM24} introduced the $\eps$-Euler--Maruyama scheme for jump--driven SDEs, deriving both strong and weak convergence rates for $p\ge 2$ and identifying an optimal choice of $\eps$.

The aim of this paper is to derive the strong convergence rates, relying  on the \emph{dynamic cutting} procedure, which we hope to be an accurate and effective tool for approximation of the process $X$. The idea is to approximate a L\'evy process by a compound Poisson type process, obtained from  $Z$  by deleting small jumps together with their compensator in a time--dependent way. More precisely, in contrast to \cite{AR01} where the small jumps of certain size were removed, we delete the jumps whose size depends on time (see Picture~\ref{ar_vs_dc} and \eqref{tau_def} for the definition of functions $\tau^\pm$). This idea was introduced in \cite{KK13}, \cite{K14} for deriving the small-time approximation of the transition probability density of a L\'evy process. In this paper we continue the research started in \cite{IKP25}, where we applied dynamic cutting to find the strong convergence rates for approximation of a L\'evy process and provided some numerical examples.
We prove the strong convergence rates
for the Euler scheme for $X$ (see Theorem~\ref{tstr}) and illustrate our results by numerical experiments. In Table~\ref{Tab1} we provide the differences in errors, obtained in the Asmussen-Rosi\'nski scheme (AR), compared to ours (DC). As one can observe,  the AR method exhibits bigger error, which increases as the jump intensity becomes larger.

\begin{figure}[H]
    \centering
    \includegraphics[width=0.7\textwidth]{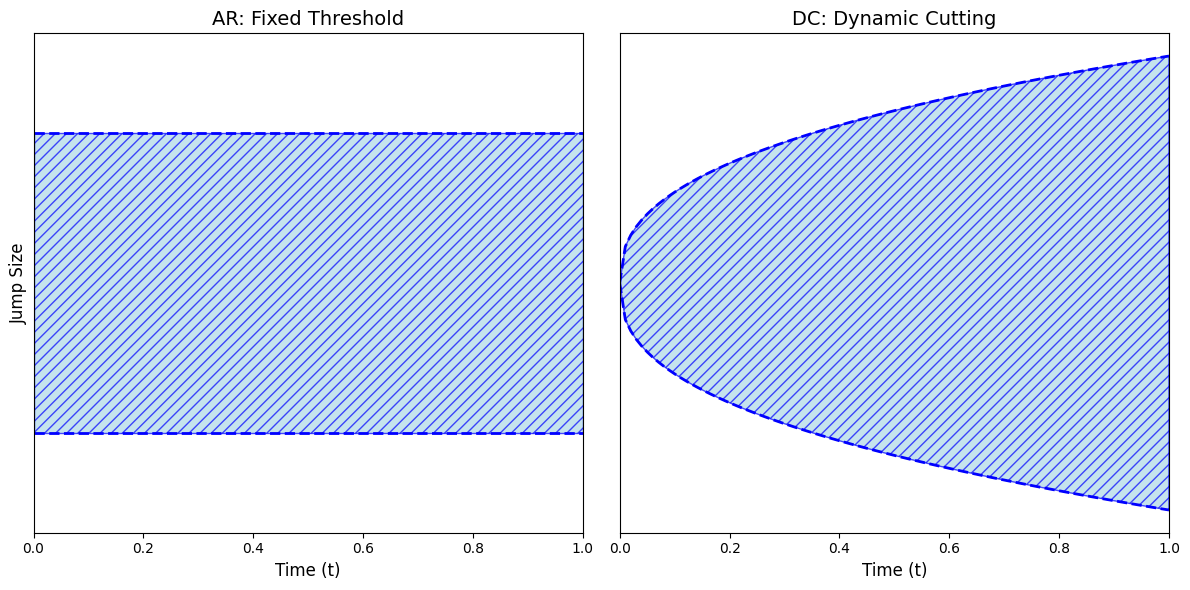}
    \caption{AR vs DC}
    \label{ar_vs_dc}
\end{figure}

\section{Preliminaries and Assumptions}

We write $f\lesssim g$ if there exists a generic constant $C>0$ such that $f \leq C g$. The notation $ f\asymp g$ means that $f\lesssim g$ and $g\lesssim f$.
Since we are interested in the order of convergence, the particular constants are not crucial for us; therefore, we write $\lesssim$ or $\gtrsim$ instead of writing inequalities with explicit constants.

We assume that the L\'evy process $Z$ admits the L\'evy-Ito decomposition
\begin{equation}\label{Set-10}
    Z_t = \int_0^t \!\!\int_\Real z (N(ds,dz) - \nu(dz)ds) =\int_0^t \!\!\int_\Real z \tN(ds,dz),
\end{equation}
where
\[
    N((0, t], B) := \#\{ s \in (0, t] : \Delta Z_s \in B \}, \quad B \in \mathcal{B}(\Real \setminus \{0\})
\]
is the Poisson random measure associated with $Z$, $\nu$ is the respective L\'evy measure, and
$$
\tN(ds,dz) = N(ds,dz)- \nu(dz)ds
$$
is the compensated Poisson random measure.

Define for $r > 0$
\begin{align} \label{Set11}
    N^+(r) := \nu((r, \infty)), &\quad
    N^-(r) := \nu((-\infty, -r)), \nonumber \\
    N(r) &:= N^+(r) + N^-(r).
\end{align}

Denote  the generalized inverses of $ N^\pm $ at $ t^{-1} $ for $t > 0$ by
\begin{align}
    \tau^\pm(t) &:= \sup \left\{ r \geq 0 : N^\pm(r) \geq \frac{1}{t} \right\}, \label{tau_def} \\
    \tau(t) &:= \max \left( \tau^+(t), \tau^-(t) \right). \label{tau}
\end{align}

Define the \emph{Pruitt functions}, see \cite{Pr81}:
\begin{equation}\label{Set15}
    \psi^{L,\pm}(\xi) := \int_{u\neq 0,|u \xi|\leq 1}|u \xi|^2 \, \nu^\pm(du), \quad
    \psi^L(\xi) := \psi^{L,+}(\xi)+ \psi^{L,-}(\xi).
\end{equation}

For the rest of the paper we assume that the L\'evy measure $\nu$ has no atom at $0$.

Below we collect assumptions on $N^\pm(r)$ and the coefficients $a(s,x)$, $b(s,x)$ and $c(s,x,z)$.

\begin{assumption}[Kernel assumptions]\label{N}
    \,\,
    \begin{enumerate}
        \item  Functions $N^\pm $ are right-continuous;

        \item $ N^\pm (r) \asymp \psi^{L,\pm} (1/r)$, $r\in (0,1]$.
    \end{enumerate}

\end{assumption}

Note that the assumption of the right continuity of $N^\pm$ implies that $N^\pm(\tau^\pm(t)) = \frac{1}{t}$; see \cite[Prop.2.3]{EH13}. Clearly, this assumption always holds if $\nu(dz)$ is absolutely continuous with respect to the Lebesgue measure.

It was shown in \cite{IKP25} that under Assumption~\ref{N}.2  the growth bound for $N^{\pm}$ provides the growth bound for $\tau^{\pm}$:
\begin{equation} \label{tau2}
    N^{\pm}(r) \leq r^{-\alpha_\pm}, \; r \in (0,1]
    \; \text{implies}  \;
    \tau^{\pm}(t) \lesssim t^{1/\alpha_\pm}, \; t \in (0,1].
\end{equation}

We also define
\begin{equation}\label{alpha}
    \alpha := \max(\alpha^-, \alpha^+).
\end{equation}

\begin{assumption}[Regularity of coefficients]\label{C}
    Assume that coefficients $ a(s,x) $, $ b(s,x) $, $ c(s,x,z) $ are deterministic measurable functions, satisfying the following conditions.

    \begin{enumerate}
    \item (Lipschitz condition) There exists a constant  $L_{a,b}>0$ and a measurable function $ L_c : [0,T] \times \Real \to \Real_+ $ such that for all $x,y\in \Real$
        \begin{align}
        |a(t,x) - a(t,y)| + |b(t,x) - b(t,y)| &\leq L_{a,b} |x - y|, \quad t \in [0,T], \label{Lipab} \\
        |c(t,x,z) - c(t,y,z)| &\leq L_c(t,z) |x - y|, \quad t \in [0,T],\,  z \in \Real. \label{Lipc}
        \end{align}

    \item (Peano condition)
        There exists a real number $\gamma \in (0,1]$ such that
        \begin{equation}\label{Lipp}
        |a(t,x) - a(s,y)| + |b(t,x) - b(s,y)| \leq L_{a,b} \left( |t - s|^\gamma + |x - y| \right), \quad x, y \in \Real, \ s, t \in [0,T].
        \end{equation}
        \end{enumerate}
\end{assumption}

\begin{assumption}\label{Int}(Joint integrability)
Assume that   for some  $p > 1$
\begin{equation}
\overline{L}_c(t,z) := L_c(t,z) \vee |c(t,0,z)| \in L_p(\nu),
\end{equation}
and
\begin{equation}
M_p(\cdot) \in L_{ 1 + \zeta}([0,T]) \quad \text{for some $\zeta \in (0,1]$},
\end{equation}
where
    \begin{equation}\label{Mp}
        M_p(t) :=
        \begin{cases}
            \displaystyle \int_\Real |\overline{L}_c(t,z)|^p \, \nu(dz), \;  1  < p < 2, \\[1em]
            \displaystyle \int_\Real |\overline{L}_c(t,z)|^p \, \nu(dz) + \left( \int_\Real |\overline{L}_c(t,z)|^2 \, \nu(dz) \right)^{\frac{p}{2}}, \; p \ge 2.
        \end{cases}
    \end{equation}
\end{assumption}
\begin{assumption}[Initial condition]\label{Ini}
    We assume that $X_0\in K$,
    where $K\subset \Real$ is  compact.
\end{assumption}

\begin{assumption}[Boundedness]\label{NC}
There exists $\sigma\in (0,1)$ such that for all $x\in \Real$, $t\in [0,T]$, $|z|\leq 1$ we have
    \begin{equation}
        |c(t,x,z)|\lesssim t^{-\sigma} (1+|x|)|z|.
    \end{equation}
\end{assumption}

\section{Dynamic Cutting Method}

In this subsection we describe the procedure of simulation of a L\'evy process by means of ``dynamic cutting'' of the support of the respective L\'evy measure.  Here we only sketch the argument and refer to \cite{IKP25} for details.

 We delete the   small-jump part  and its compensator by the following dynamic cutting:
\begin{equation}\label{LI11}
    \begin{split}
        Z_{h,\eps}^{1,DC} (t)&  :=     m_{h,\eps}^{DC}(t) +  Z^{CP,+}_{h,\eps}(t) + Z^{CP,-}_{h,\eps}(t),
        \quad t>0,
    \end{split}
\end{equation}
where
\begin{equation}\label{CPpm}
    Z^{CP,+}_{h,\eps}(t):= \sum_{s\leq t } \Delta Z_s \I_{ \Delta Z_s \geq \tau^+((sh)^{\eps } )}, \quad
    Z^{CP,-}_{h,\eps}(t):= \sum_{s\leq t } \Delta Z_s \I_{ \Delta Z_s\leq -\tau^-((sh)^{\eps } )},
\end{equation}
are the upward and downward jumps,  and
\begin{equation}\label{m}
    m_{h,\eps}^{DC}(t) := \int_0^t \left( \int_{-\infty}^{-\tau^-((sh)^\eps)} u \, \nu(du) + \int_{\tau^+((sh)^\eps)}^{\infty} u \, \nu(du)\right)ds
\end{equation}
is the compensated drift.

We describe   the construction of $Z^{CP,+}_{h,\eps}(t)$ only; in order to construct  $Z^{CP,-}_{h,\eps}(t)$, we need to repeat the steps in the construction of  $Z^{CP,+}_{h,\eps}(t)$,  replacing $+$ with $-$ in the respective functions, and multiplying the jumps by $-1$.

Note that the  logarithm of the Laplace transform of    $Z^{CP,+}_{h,\eps}(t)$   (cf. \cite{IKP25} for the proof) equals
\begin{equation}\label{Lap}
    \begin{split}
        -\ln \Ee e^{- r Z^{CP,+}_{h,\eps}(t)}& =
        \int_0^t \!\!\int_{u> \tau^+((sh)^{\eps})} (1- e^{-r u }) \, \nu^+(du)ds\\
        & =  \int_0^\infty  (1- e^{-r u }) \, \mu_{h,\eps}^+(t,du),
    \end{split}
\end{equation}
where
\begin{equation}\label{muh0}
    \mu_{h,\eps}^+(t,du) := \left(\frac{1}{h} (N^+(u))^{-1/\eps}  \wedge t \right)\nu^+(du).
\end{equation}
It follows from representation \eqref{muh0} that
\begin{equation}\label{muh1}
    \mu_{h,\eps}^+(t,du) = \frac{1}{h} \mu_{1,\eps}^+\left(th,du\right)=  t \mu_{th,\eps}^+\left(1,du\right).
\end{equation}
Fix $t>0$. Then  $Z^{CP,+}_{h,\eps}(t)$ is the Compound Poisson-type r.v.:
\begin{equation}\label{Zplus}
    Z^{CP,+}_{h,\eps}(t) = \sum_{k=1}^{N^+_t} Z_{T_k}^+,
\end{equation}
where $N^+_t$ is a Poisson r.v. with   intensity
\begin{equation}
    \lambda_{h,\eps}^+(t) = \int_0^t \!\!\int_{u\geq \tau^+((sh)^\eps)} \nu(du)ds = \mu_{h,\eps}^+(t,\Real_+),
\end{equation}
and $Z_{T_k}^+$, $k\geq 1$, are i.i.d. r.v.'s such that $Z_t^+$ has the distribution function
\begin{equation}\label{Fplus}
    F_{th,\eps}^+(x) := \frac{1}{\lambda_{h,\eps}^+(t)  }\int_{0}^x \mu_{h,\eps}^+(t,du),
\end{equation}
 and $T_k$, $k \ge 1$ denote the time points of the underlying Poisson process $N^+_t$.

Note that by \eqref{muh0} this distribution function depends on $th$, and the function   $\lambda_{h,\eps}^+(t)$  can be calculated explicitly:
\begin{equation}\label{lam1}
    \lambda_{h,\eps}^+(t)
    =\int_0^t \nu^+\{u: \, u\geq \tau^+((sh)^{\eps })\}ds =
    \int_0^t \left(\frac{1}{h s}\right)^{\eps}ds = \frac{t^{1-\eps}}{1-\eps} \left(\frac{1}{h }\right)^{\eps},
\end{equation}
as well as its inverse:
\begin{equation}\label{inv-lam}
    \bigl(\lambda^{\pm}\bigr)^{-1}(t) = \Bigl(t\,(1-\eps)\,h^{\eps}\Bigr)^{\frac{1}{1-\eps}}.
\end{equation}
One can get a more user-friendly expression for the distribution function of $Z_k^+(t)$:
\begin{equation}
    F_{th,\eps}^{\pm}(x)
    = \begin{cases}
        \displaystyle \left(\frac{1}{t\,h}\right)^{1-\eps} \eps \,
        \Bigl(N^\pm(x)\Bigr)^{\tfrac{\eps-1}{\eps}},
        & N^{\pm}(x) \ge (t\,h)^{-\eps}, \\[1.2em]
        \displaystyle 1 - (1-\eps)\,(t\,h)^{\eps}\,N^{\pm}(x),
        & N^{\pm}(x) < (t\,h)^{-\eps}.
    \end{cases}
\end{equation}
The corresponding inverse function is
\begin{equation}
    \bigl(F_{th,\eps}^\pm\bigr)^{-1}(u)
    = \begin{cases}
        \displaystyle
        \tau^\pm\!\Bigl((t\,h)^{\eps}\,\bigl(\tfrac{u}{\eps}\bigr)^{-\tfrac{\eps}{\eps - 1}}\Bigr),
        & u \le \eps, \\[1em]
        \displaystyle
        \tau^\pm\!\Bigl(\tfrac{(1 - \eps)\,(t\,h)^{\eps}}{1 - u}\Bigr),
        & u > \eps.
    \end{cases}
\end{equation}
In order to simulate the jump times of  the  inhomogeneous Poisson process $N^+_t$ we observe  that the $i$-th jump time $T_i^+$ has the same distribution as $ (\lambda^+)^{-1}(\Gamma_i)$, where $\Gamma_i$ is the  jump time of the $i$-th jump of the Poisson process with intensity 1, see \cite{Ci75}.

We  provided  Algorithms \eqref{j-time} and \eqref{j-size}  for generating  jump times and sizes in Section~\ref{S-Alg}.

\section{Euler–Maruyama Approximation and Main Results}

Let us describe the Euler scheme for generating the solution to \eqref{BM_SDE}. We consider two models. In the first model we write the Euler scheme for  \eqref{BM_SDE}, in which the  driven L\'evy process is substituted by  that  introduced in \eqref{LI11}.  In this case \eqref{BM_SDE}  transforms into
\begin{equation}\label{SDE22}
    X_t = X_0 + \int_0^t a(s, X_s) \, ds + \int_0^t b(s, X_s) \, dB_s + \int_0^t \!\!\int_{\Real \backslash B(s, h, \eps) } c(s, X_{s-}, z) \tN (ds, dz),
\end{equation}
where
\begin{equation}\label{Bsh}
    B(s,h,\eps) := \big\{ z: -\tau^-((sh)^\eps) < z < \tau^+((sh)^\eps) \big\}.
\end{equation}
In the second model we substitute the integral over the small-jump part in  \eqref{BM_SDE} with the integral w.r.t. the Brownian motion (independent from $B_s$), i.e.
\begin{equation}\label{SDE23}
    \begin{split}
    X_t &= X_0 + \int_0^t a(s, X_s) \, ds + \int_0^t b(s, X_s) \, dB_s + \int_0^t \!\!\int_{\Real \backslash B(s, h, \eps) } c(s, X_{s-}, z) \tN (ds, dz) \\
    &\qquad +
    \int_0^t \!\!\int_{B(s, h, \eps)} c^2(s,X_{s-},z) \, \nu(dz) \, dW_s,
    \end{split}
\end{equation}
where $W_t$ is a Brownian motion, independent from $B_t$.
Below we provide the Euler-Maruyama scheme for the transformed equations \eqref{SDE22} and \eqref{SDE23}, respectively.

Let $\pi_n:= \{0=t_0<t_1<\dots<t_{n-1}<t_n=T\}$ be a partition of the interval $[0,T]$. Such time points are called regular \emph{grid times}. In addition, the process has large jumps which occur at \emph{jump times}. Such points are illustrated as blue and red dots respectively in Figure~\ref{time_grid}   below.
\begin{figure}[h]
    \centering
    \includegraphics[width=0.9\textwidth]{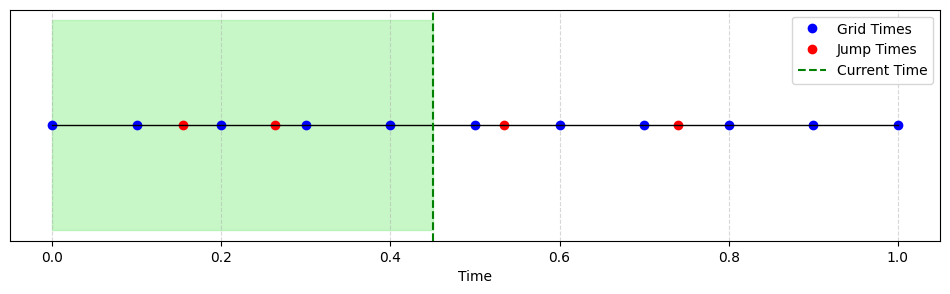}
    \caption{Grid and Jump Times}
    \label{time_grid}
\end{figure}

Therefore, we no longer have a classic Euler-Maruyama situation with equal steps $\Delta := \frac{T}{n}$, because jump times occur at random according to the inhomogeneous Poisson process rate \eqref{lam1} and are sampled using $T_i^\pm \sim (\lambda^\pm)^{-1}(\Gamma_i)$, where $\Gamma_i$ is a sum of i.i.d. exponential random variables with scale 1. Denote a set of jump times as $S^\pm := \{S^+, S^-\}$.

The total grid is then constructed as merged and sorted set of both grid and jump times, where jump points include both positive and negative jumps. Since the total grid consists of several components, we define $\rho(t)$ as the number of merged grid and jump times occurred before time $t$.

Denote the variance of small jumps between two consecutive time points $t_{i-1}$ and $t_i$ as
\begin{equation}\label{sig}
    \sigma_i^2(x) := \int_{t_{i-1}}^{t_i} \int_{B(s, h, \eps)} c^{2}(s,x,z) \, \nu(dz) \, ds.
\end{equation}

Let $\Delta_i := t_i - t_{i-1}, \; i = 1, \dots, \rho(T)$. Then we can write two types of Euler-Maruyama schemes: the one with omitted small jumps and the second one in which small jumps are replaced with the Gaussian approximation:
\begin{align}
    X_{t_i}^{1,\tau} &:= X_{t_{i-1}}^{1,\tau} + a(t_{i-1},X_{t_{i-1}}^{1,\tau}) \Delta_i + b(t_{i-1}, X_{t_{i-1}}^{1,\tau}) \sqrt{\Delta_i} \xi_i + C_{t_{i-1}}(X_{t_{i-1}}^{1,\tau}), \label{Eu1} \\
    X_{t_i}^{2,\tau} &:= X_{t_{i-1}}^{2,\tau} +
    \underbrace{a(t_{i-1},X_{t_{i-1}}^{2,\tau}) \Delta_i}_{\textbf{drift}} +
    \underbrace{b(t_{i-1}, X_{t_{i-1}}^{2,\tau}) \sqrt{\Delta_i} \xi_i}_{\textbf{diffusion}} +
    \underbrace{C_{t_{i-1}}(X_{t_{i-1}}^{2,\tau})}_{\textbf{large jumps}} +
    \underbrace{\sigma_i(X_{t_{i-1}}^{2,\tau})\zeta_i}_{\textbf{small jumps}}, \label{Eu2}
\end{align}
where
\begin{itemize}
    \item $C_{t_i}(x) := c(t_i,x,Z_{t_i}^\pm) \mathbb{I}_{\{t_i \in S\}}$ -- large-jump increment if a jump occurred at time $t_i$,
    \item $(\xi_i)_{1 \leq i \leq \rho(T)}$, $(\zeta_i)_{1\leq i\leq \rho(T)}$ -- sequences of i.i.d. standard Gaussian random variables,
    \item $Z_{t_i}^\pm \sim F_{t_i^\pm h,\eps}^\pm$ is the large jump of the process at time $t_i$.
\end{itemize}

We put $X_t^{k,\tau}= X_{t_i}^{ k,\tau}$, $k=1,2$, for $t\in (t_{i-1},t_i)$.

Below, we formulate our main result.  Recall the definition of
$\alpha$, cf. \eqref{alpha}.
\begin{theorem}\label{tstr}
    Suppose that Assumptions \ref{N} -- \ref{NC} are satisfied.

    \begin{enumerate}
        \item Let $p > 1$. Then for any $\eps^*\in (0,\frac{1}{2p})$
   \begin{equation}\label{str-eq2}
    \Bigl\|\sup_{t\in [0,T]} |X_t - X_t^{2,\tau}|\Bigr\|_{L^p(\Omega)}
    \lesssim n^{-\left\{ \gamma \wedge \frac{2\zeta}{p(1+\zeta)} \right\}} +  n^{ -\frac{2-\alpha}{2\alpha} - \frac{1}{2p} + \eps^*}.
   \end{equation}

    \item Let  $p >  \max(1,\alpha)$  and denote $p^* = p \wedge 2$. For $p\geq 2$ we assume in addition   that $\sigma, \eps$ and $p$ satisfy the inequality  $\sigma+\eps p/2 < 1$.  Then
    \begin{equation}\label{str-eq1}
        \Bigl\|\sup_{t\in [0,T]} |X_t - X_t^{1,\tau}|\Bigr\|_{L^p(\Omega)}
        \lesssim n^{-\left\{ \gamma \wedge \frac{2\zeta}{p(1+\zeta)} \right\}} + n^{-\frac{p^*- \alpha}{p^*\alpha }}.
    \end{equation}
    \end{enumerate}
\end{theorem}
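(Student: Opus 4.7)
The plan is to split the total error via the triangle inequality
\[
\sup_{t\leq T}|X_t - X_t^{k,\tau}| \leq \sup_{t\leq T}|X_t - X_t^{k}| + \sup_{t\leq T}|X_t^{k} - X_t^{k,\tau}|
\]
for $k=1,2$, where $X^k$ denotes the solution of the cut SDE \eqref{SDE22} (for $k=1$) or \eqref{SDE23} (for $k=2$). The first term is the \emph{cutting error}, produced by removing the small-jump component of $Z$ below the time-dependent threshold $\tau^\pm((sh)^\eps)$ and, in the case $k=2$, compensating by a matched Gaussian; the second is the \emph{Euler discretization error} for the cut equation on the enlarged partition merging $\pi_n$ with the Poisson jump times $S^\pm$. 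Each part is estimated in $L^p$ and then combined.

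For the cutting error, observe that $X_t - X_t^k$ is a martingale driven solely by the small-jump integrand supported in $B(s,h,\eps)$, with (for $k=2$) the Gaussian correction subtracted. Applying the Kunita/BDG inequality reduces the bound to an integral of the form $\int_0^t M_p(s)\,[\tau((sh)^\eps)]^{(2-\alpha)\cdot(\text{moment power})}\,ds$, where the small-jump second moment $\int_{|z|\leq r}|z|^2\,\nu(dz)$ is dominated by $r^{2-\alpha}$ via the Pruitt function bound in Assumption~\ref{N}.2, and $\tau(r)\lesssim r^{1/\alpha}$ by \eqref{tau2}. Splitting the cases $p<2$ and $p\geq 2$ through \eqref{Mp}, applying Hölder in time against the $L^{1+\zeta}$ norm of $M_p$, and closing by Gronwall, yields the rate $n^{-(p^*-\alpha)/(p^*\alpha)}$ for $k=1$. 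For $k=2$ the quadratic variation of the residual is cancelled by the Brownian correction, so only a higher-order remainder survives, producing the additional gain $n^{-1/(2p)}$ and leading to $n^{-(2-\alpha)/(2\alpha) - 1/(2p)+\eps^*}$, where $\eps^*$ absorbs the slack in the optimization over the cutting parameter $\eps$.

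For the Euler discretization error I would work on the enlarged partition. Writing
\[
X_t^k - X_t^{k,\tau} = \int_0^t[a(s,X_s^k)-a(t_{\eta(s)},X_{t_{\eta(s)}}^{k,\tau})]\,ds + (\text{Brownian and jump terms}),
\]
where $t_{\eta(s)}$ is the last point of the enlarged grid at or before $s$, I would add and subtract $a(t_{\eta(s)},X_s^k)$ in each summand: the time-regularity increments are controlled via the Peano bound \eqref{Lipp}, producing the $n^{-\gamma}$ term, while the path increments $|X_s^k - X_{t_{\eta(s)}}^k|$ are handled by a standard $L^p$ Kunita modulus estimate using Assumptions~\ref{C} and \ref{Int}; an interpolation between the $L^{1+\zeta}([0,T])$-regularity of $M_p$ and the uniform $L^p$-moment bound on $X^k$ then produces the $n^{-2\zeta/(p(1+\zeta))}$ contribution. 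The Lipschitz remainders $|X_s^k - X_{t_{\eta(s)}}^{k,\tau}|$ are closed by Gronwall's lemma on the resulting Volterra-type inequality.

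The main obstacle I expect is the random nature of the enlarged grid: the jump times $T^\pm_i$ form an inhomogeneous Poisson process of intensity \eqref{lam1} of order $h^{-\eps}$, so that $\rho(T)$ is a concentrated but random count of order $n+n^\eps=O(n)$. Passing Kunita/BDG estimates through this random partition requires either localizing onto the event $\{\rho(T)\leq Cn\}$ and bounding its complement via a Chernoff-type estimate, or working directly with compensated measures so that the inhomogeneity is absorbed by the driving intensity. Moreover, the maximal inter-grid gap $\max_i\Delta_i$ must be controlled in $L^p$ so that the local $\Delta_i^\gamma$ and $\Delta_i^{1/2}$ factors indeed yield the announced polynomial rates. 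Threading the singular factor $t^{-\sigma}$ from Assumption~\ref{NC} through every BDG step — while respecting the parameter restrictions $\sigma+\eps p/2<1$ in part (2) and $\eps^*<1/(2p)$ in part (1) — is the technical heart of the argument, since these restrictions are precisely what make the time integrals near $s=0$ finite in the relevant $L^p$-norms.
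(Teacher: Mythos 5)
Your decomposition differs from the paper's: you insert the exact solution $X^k$ of the cut SDE \eqref{SDE22}/\eqref{SDE23} as the intermediate process, whereas the paper inserts the Euler--Peano scheme $\widetilde X$ of the \emph{original} equation \eqref{X_tilde} (all jumps retained, coefficients frozen), imports the bound $n^{-\gamma\wedge \frac{2\zeta}{p(1+\zeta)}}$ from \cite[Lem.~3.1]{BM24}, and then compares $\widetilde X$ with $X^{k,\tau}$ term by term (drift, diffusion, large jumps, small jumps) closing with the extended Gronwall lemma. That structural difference is not fatal in itself, and your treatment of scheme~1 (omitted small jumps via Kunita/Novikov plus the Pruitt-function bounds $\int_{|z|\le r}|z|^p\nu(dz)\lesssim r^{p-\alpha}$ and $\tau(t)\lesssim t^{1/\alpha}$) is essentially the paper's argument for term (D) in that case.

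There is, however, a genuine gap in your argument for part (1). You assert that for $k=2$ ``the quadratic variation of the residual is cancelled by the Brownian correction, so only a higher-order remainder survives.'' This is false: the compensated small-jump integral and the Gaussian correction are driven by \emph{independent} noises ($\tN$ and $W$), so the quadratic variation of their difference is the \emph{sum} of the two quadratic variations, not a cancellation. A direct BDG/Kunita bound on the difference therefore gives nothing better than the scheme-1 rate, and the extra factor $n^{-\frac{1}{2p}+\eps^*}$ cannot be produced this way. The actual mechanism in the paper is distributional, not pathwise-algebraic: one writes the accumulated small jumps as a sum of martingale differences $J_i^{\text{small}}$, bounds the Wasserstein distance $\Ww_p$ between the normalized sum and a standard Gaussian via Bobkov's integral inequality (Theorem~\ref{bobkov_wp_bound}) combined with the martingale Berry--Esseen bound (Theorem~\ref{HJ88}), and uses the optimal transport map of \cite[Th.~1.1]{Fo10} to convert this distributional closeness into an $L^p$ coupling with the Gaussians $\zeta_i$; the exponent $-\frac{1}{2p}+\eps^*$ then comes from optimizing the Lyapunov-ratio exponent $\frac{\delta}{(3+2\delta)p}$ over large $\delta$. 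None of this is present in your proposal, and without it part (1) does not follow. Your concerns about the random merged grid and the singularity $s^{-\sigma}$ are legitimate but secondary; the paper disposes of the grid issue by noting that the choice $h=n^{-1/\eps}$ makes the expected number of large jumps $O(n)$.
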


\begin{remark}
In the dynamic cutting (DC) approach, the intensity of positive (or negative) large jumps is defined by $\lambda_{DC}^+(t)$ (see~\eqref{lam1}). Choosing an optimal parameter $h:=n^{-1/\eps}$ and taking $T:=1$  yields that
$\lambda_{DC}^+(1) = \frac{n}{1-\eps} $, meaning that the expected number of large jumps in the DC scheme is of order $O(n)$, independent of the tail behavior of the L\'evy measure.

In contrast, consider the classic Asmussen-Rosi\'nski (AR) approach where the threshold is fixed. In many models one assumes that the L\'evy measure is absolutely continuous with respect to the Lebesgue measure, and its density behaves for small $z$ as
\[
    \nu(z) \sim \frac{C}{|z|^{1+\alpha}}, \quad \alpha\in (0,2).
\]
Thus, we can calculate the intensity similarly:
\[
    \lambda_{AR}^+(1) =  \int_0^1 \!\! \int_{\eps}^\infty \nu(dz) \, ds \sim  \frac{C}{\alpha \eps^\alpha}.
\]
One may set $\eps = \frac{1}{n}$, which is optimal according to \cite{BM24}. Then we obtain
$\lambda_{AR}^+(1) \sim n^\alpha$.

Consequently, in the AR approach the expected number of large jumps is of order $n^\alpha$. Therefore, the total contribution to the small-jump error (which consists of both the grid part and the contribution from large jumps) is of order $n + n^\alpha$.
In particular, if $\alpha\in (0,1)$ then $n + n^\alpha  \approx  n$, while if $\alpha\in [1,2)$ the contribution is dominated by the $n^\alpha$ term.

To summarize,  the AR method  overestimates  the  number of   small jumps for big $\alpha$ and underestimates it for the small ones, whereas  the DC approach yields an expected number of large jumps of order $n$ irrespective of $\alpha$. This makes the DC scheme more robust in controlling the strong error when dealing with L\'evy processes with higher jump activity.
As we will see in Section~\ref{Sim}, AR method gives bigger error than the DC method, and this error increases with the increase of $\alpha$.
\end{remark}

\section{Proofs of Convergence Results}

We start by proving \eqref{str-eq2}. First, we decompose the solution to \eqref{BM_SDE} as
\begin{align*}
    X_t = X_0
    &+ \int_0^t a(s, X_s) \, ds
     + \int_0^t b(s, X_s) \, dB_s \nonumber \\
    &+ \int_0^t \!\! \int_{B(s, h, \eps)} c(s, X_{s-}, z) \, \widetilde{N}(ds, dz)
     + \int_0^t \!\! \int_{\Real \setminus B(s, h, \eps)} c(s, X_{s-}, z) \, \tN(ds, dz).
\end{align*}
Let $ \eta(t) := \sup{\{t_n: t_n \leq t} \}$, and recall that $\rho(t)$ denotes the number of merged grid and jump
times occurred before time $t$. Note that $\eta(t)= t_{i-1}$ for each $t\in [t_{i-1}, t_{i}), \; i \geq 1$.
\begin{equation}\label{X_tilde}
\widetilde{X}_t = \int_0^t a(\eta(s), \widetilde{X}_{\eta(s)}) \, ds + \int_0^t b(\eta(s), \widetilde{X}_{\eta(s)}) \, dB_s + \int_0^t \!\!\int_{-\infty}^{+\infty} c(s, \widetilde{X}_{\eta(s)}, z) \, \tN(ds, dz)
\end{equation}
is the Euler-Peano scheme for \eqref{BM_SDE} with frozen coefficients.

\subsection{Scheme 2: Gaussian Approximation of Small Jumps}
We can rewrite \eqref{Eu2} using the step function $\eta(s)$:
\begin{equation}\label{Eu22}
    \begin{split}
X_t^{2,\tau}&= \int_0^t a(\eta(s), X_{\eta(s)}^{2,\tau}) \, ds + \int_0^t b(\eta(s), X_{\eta(s)}^{2,\tau}) \, dB_s +\sum_{i=1}^{\rho(t)}\sigma_i(X_{t_{i-1}}^{2,\tau})\zeta_i \\ & \qquad \qquad + \int_0^t \int_{\Real \setminus B(s, h, \eps)} c(s, X_{\eta(s)}^{2,\tau}, z)\tN(ds, dz).
\end{split}
\end{equation}

Split
\begin{align*}
\left\| \sup_{0 \leq i \leq n} \left| X_{t_i} - X_{t_i}^{2,\tau} \right| \right\|_{L^p(\Omega)} = &\left\| \sup_{0 \leq i \leq n} \left| X_{t_i} - \widetilde{X}_{t_i} + \widetilde{X}_{t_i} - X_{t_i}^{2,\tau} \right| \right\|_{L^p(\Omega)} \\
\leq &\left\| \sup_{0 \leq i \leq n} \left| X_{t_i} - \widetilde{X}_{t_i} \right| + \left| \widetilde{X}_{t_i} - X_{t_i}^{2,\tau} \right| \right\|_{L^p(\Omega)} \\
\leq &\left\| \sup_{t\in [0,T]} \left| X_t - \widetilde{X}_t \right| \right\|_{L^p(\Omega)} + \left\| \sup_{0 \leq i \leq n} \left| \widetilde{X}_{t_i} - X_{t_i}^{2,\tau} \right| \right\|_{L^p(\Omega)}.
\end{align*}

Under assumptions \ref{C}, \ref{NC} and \ref{Int} of Theorem~\ref{tstr} we have the following estimate, which is taken from \cite[Lem.3.1]{BM24}:
\begin{equation}\label{lem31_est}
 \left\| \sup_{t \in [0,T]} |X_t - \widetilde{X}_t| \right\|_{L^p(\Omega)} \lesssim n^{- \gamma \wedge \frac{2\zeta}{p(1+\zeta)} }.
\end{equation}
The proof of the above lemma extends to $p  >  1$ by switching Kunita's BDG to Novikov's theorem \ref{Novikov} which is valid for  $p \in [1,2]$. To prove \eqref{str-eq2} we show that
\begin{equation}
\sup_{\pi_n } \left\| \sup_{0 \leq i \leq  \rho(T)} \left| \widetilde{X}_{t_i} - X_{t_i}^{2,\tau} \right| \right\|_{L^p(\Omega)}\lesssim \tau(n^{-1}) n^{\frac{1}{2} - \frac{\delta}{(3+2\delta)\,p}}.
\end{equation}

By the definition of $\eta(t)$, the difference between $ \widetilde{X}_{\eta(t)} $ and $ \widehat{X}^{2,\tau}_{\eta(t)} $, $t\in [0,T]$, is equal to
\begin{align*}
\widetilde{X}_{\eta(t)} - X_{\eta(t)}^{2,\tau} &= \int_0^{\eta(t)} \left( a(\eta(s), \widetilde{X}_{\eta(s)}) - a(\eta(s), X_{\eta(s)}^{2,\tau}) \right) ds \\
&\quad + \int_0^{\eta(t)} \left( b(\eta(s), \widetilde{X}_{\eta(s)}) - b(\eta(s), X_{\eta(s)}^{2,\tau}) \right) dB_s \\
&\quad + \int_0^{\eta(t)} \int_{\Real \setminus B(s, h, \eps)} \left( c(s, \widetilde{X}_{\eta(s)}, z) -c(s, X_{\eta(s)}^{2,\tau}, z) \right) \tN(ds, dz) \\
&\quad + \sum_{i=1}^{\rho(t)} \left( \int_{t_{i-1}}^{t_i} \int_{B(s, h, \eps)} c(s, \widetilde{X}_{\eta(s)}, z) \tN(ds, dz)- \sigma_i(X_{t_{i-1}}^{2,\tau})\zeta_i \right).
\end{align*}
Applying the Minkowski's inequality we can decompose the $ L^p $-norm for $p \ge 1$  of the sum into four term,
\begin{equation}
\left\|X_{\eta(t)}^{2,\tau}- \widetilde{X}_{\eta(t)} \right\|_{L^p(\Omega)} \leq \underbrace{\mathrm{(drift)}}_{\textbf{(A)}}
    + \underbrace{\mathrm{(diffusion)}}_{\textbf{(B)}}
    + \underbrace{\mathrm{(large~jumps)}}_{\textbf{(C)}}
    + \underbrace{\mathrm{(small~jumps)}}_{\textbf{(D)}},
\end{equation}
which we estimate separately.

\noindent \textbf{Estimation of (A).}
Applying subsequently the Minkowski's Integral Inequality and the Lipschitz assumption \eqref{Lipab} we derive
\begin{align*}
\textbf{(A)} &:= \left\|\int_0^{\eta(t)} \left( a(\eta(s), \widetilde{X}_{\eta(s)}) - a(\eta(s), X_{\eta(s)}^{2,\tau}) \right) ds \right\|_{L^p(\Omega)} \\
&\leq \int_0^{\eta(t)} \left\| a(\eta(s), \widetilde{X}_{\eta(s)}) - a(\eta(s), X_{\eta(s)}^{2,\tau}) \right\|_{L^p(\Omega)} ds\\
 &\leq L_{a,b} \int_0^{\eta(t)}  \left\| \widetilde{X}_{\eta(s)}- X_{\eta(s)}^{2,\tau} \right\|_{L^p(\Omega)} \, ds \\
&\lesssim \int_0^{\eta(t)} \sup_{0 \leq s \leq \eta(t)} \left\| \widetilde{X}_{\eta(s)}- X_{\eta(s)}^{2,\tau} \right\|_{L^p(\Omega)} ds.
\end{align*}

\noindent \textbf{Estimation of (B).}
For the second term, we use the Burkholder-Davis-Gundy (BDG) inequality  for continuous martingales  (see  \ref{bdg}) and again the Lipschitz assumption \eqref{Lipab}:
\begin{align*}
\textbf{(B)} &:= \left\| \int_0^{\eta(t)} \left( b(\eta(s), \widetilde{X}_{\eta(s)}) - b(\eta(s), X_{\eta(s)}^{2,\tau}) \right)dB_s \right\|_{L^p(\Omega)} \\
&\leq \left( C_p^{BDG} \right)^{\frac{1}{p}}
\left( \int_0^{\eta(t)} \left\| b(\eta(s), \widetilde{X}_{\eta(s)}) - b(\eta(s), X_{\eta(s)}^{2,\tau}) \right\|_{L^p(\Omega)}^2 ds \right)^{\frac{1}{2}}\\
&\leq \left( C_p^{BDG} \right)^{\frac{1}{p}} L_{a,b} \left( \int_0^{\eta(t)}  \left\| \widetilde{X}_{\eta(s)}- X_{\eta(s)}^{2,\tau} \right\|_{L^p(\Omega)}^2 ds \right)^{\frac{1}{2}} \\
&\lesssim \left( \int_0^{\eta(t)} \sup_{0 \leq s \leq \eta(t)} \left\|\widetilde{X}_{\eta(s)}- X_{\eta(s)}^{2,\tau} \right\|_{L^p(\Omega)}^2 \, ds \right)^{\frac{1}{2}}.
\end{align*}

\noindent \textbf{Estimation of (C).}
To shorten the notation, denote
\begin{align*}
F(s, z) &:= c(s, \widetilde{X}_{\eta(s)}, z) - c(s, X_{\eta(s)}^{2,\tau} , z).
\end{align*}

We handle the term with the large jumps using Theorem~\ref{Novikov}.

\noindent Case 1. $p \ge 2$. Applying  \eqref{Novikov2}, we get
\begin{align*}
\textbf{(C)} &:= \left\| \int_0^{\eta(t)} \!\!\int_{\Real \setminus B(s, h, \eps)} c(s, \widetilde{X}_{\eta(s)}, z) - c(s, X_{\eta(s)}^{2,\tau} , z) \tN(ds, dz) \right\|_{L^p(\Omega)} \\
&= \left( \E{ \left| \int_0^{\eta(t)} \!\!\int_{\Real \setminus B(s, h, \eps)} F(s, z) \tN(ds, dz) \right|^p } \right)^{\frac{1}{p}} \\
&\leq \left( \E{ \sup_{0 \leq t \leq \eta(t)} \left| \int_0^{\eta(t)} \!\!\int_{\Real \setminus B(s, h, \eps)} F(s, z) \tN(ds, dz) \right|^p } \right)^{\frac{1}{p}} \\
&\lesssim \left( \int_0^{\eta(t)} \E{\left( \int_{-\infty}^{+\infty} \left| F(s, z) \right|^2 \nu(dz)\right)^{p/2}} ds + \E{\int_0^{\eta(t)} \int_{-\infty}^{+\infty} \left| F(s, z) \right|^p \nu(dz) \, ds} \right)^{\frac{1}{p}}.
\end{align*}

\noindent Case 2. $1 < p < 2$.

We apply \eqref{Novikov1}  with $a = p$:
\begin{align*}
    \textbf{(C)}
    &= \left\| \int_0^{\eta(t)} \!\!\int_{\Real\setminus B(s,h,\eps)} F(s,z) \,\tN(ds, dz) \right\|_{L^p(\Omega)} \\
    &\lesssim \E{\left( \int_0^{\eta(t)} \!\!\int_{\Real\setminus B(s,h,\eps)}
    \bigl| F(s,z) \bigr|^{a} \,\nu(dz)\, ds \right)^{p/ a}}^{1/p} \\
    &= \E{\int_0^{\eta(t)} \!\!\int_{\Real\setminus B(s,h,\eps)}
    \bigl| F(s,z) \bigr|^p \,\nu(dz)\, ds}^{1/p} \\
    &\lesssim \E{\int_0^{\eta(t)} \!\!\int_{-\infty}^\infty
    \bigl| F(s,z) \bigr|^p \,\nu(dz)\, ds}^{1/p}.
\end{align*}

Using the Lipschitz assumption \eqref{Lipc} and combining both cases, we obtain
\begin{align*}
\textbf{(C)} &\lesssim \left( \int_0^{\eta(t)} \sup_{0 \leq s \leq \eta(t)} \left\|\widetilde{X}_{\eta(s)}- X_{\eta(s)}^{2,\tau} \right\|_{L^p(\Omega)}^p M_p(s) \, ds \right)^{\frac{1}{p}},
\end{align*}
where the function $M_p(s)$ is defined in \eqref{Mp}.

\noindent \textbf{Estimation of (D).}
Our goal is to find an upper bound for:
\begin{equation}
\textbf{(D)}:= \left\| \sum_{i=1}^{\rho(t)} \left( \int_{t_{i-1}}^{t_i} \int_{B(s, h, \eps)} c(s, \widetilde{X}_{\eta(s)}, z) \tN(ds, dz)- \sigma_i(X_{t_{i-1}}^{2,\tau})\zeta_i \right) \right\|_{L^p(\Omega)}.
\end{equation}
Note that for $s\in [t_{i-1},t_i)$ we have $\eta(s)= t_{i-1}$. Put
\begin{equation}
J_i^{\text{small}} (x) := \int_{t_{i-1}}^{t_i} \int_{B(s, h, \eps)} c(s, x, z) \tN(ds, dz).
\end{equation}
Then we can rewrite $\textbf{(D)}$ as
\begin{equation}\label{D1}
    \textbf{(D)}= \left\| \sum_{i=1}^{\rho(t)} \left( J_i^{\text{small}} (X_{t_{i-1}}^{2,\tau}) - \sigma_i(X_{t_{i-1}}^{2,\tau})\zeta_i \right) \right\|_{L^p(\Omega)}.
\end{equation}

We rely on the approach from \cite{BM24}.

Recall the definition of the Wasserstein (or Kantorovich-Rubinstein) distance of order $p$. Let $\Pp_p$ be the space of probability measures on $\Real$, possessing moments of order $p$. The Wasserstein distance between $\mu_1,\mu_2\in \Pp_p$ is defined in the following way:
\begin{equation}
    \Ww_p(\mu_1,\mu_2) := \inf_{\pi \in \Pp_p} \left(\int_\Real \int_\Real |x-y|^p \pi(dx,dy)\right)^{\frac1p};
\end{equation}
here $p\geq 1$ and the infimum runs over all probability measures $\pi$ on $\Real\times \Real$ with marginals $\mu_1$ and $\mu_2$.

 According to \cite[Th.1.1]{Fo10}, see also \cite{GMC96},  if $p>1$, one can construct the measurable mapping $T_i: \Real\times \Real\mapsto \Real$, which optimally transports $J_i^{\text{small}}(x)$ to an absolutely continuous law,  in particular, a normal law, i.e.  $T_i\bigl(x,J_i^{small}(x)\bigr) \sim\mathcal{N} \bigl(0,\sigma^2_i(x)\bigr)$ $\forall x\in\Real$:
\begin{equation}
    \Ww_p\Bigl(J_i^{small}(x), T_i\bigl(x,J_i^{small}(x)\bigr)\Bigr) = \Ee \left\|J_i^{small}(x)- T_i(x,J_i^{small}(x))\right\|^p.
\end{equation}
\begin{remark}
    When $p=1$, the cost function $\|x-y\|$ is linear and lacks the strict convexity required to guarantee the uniqueness of the optimal transport map. Consequently, our subsequent analysis is restricted to $p>1$, where this uniqueness property holds.
\end{remark}
Let
\begin{align*}
    S_n &:= \sum_{i=1}^n J_i^{\text{small}}(X_{t_{i-1}}^{2,\tau}),
    &&V_n := \sum_{i=1}^n \sigma^2_i(X_{t_{i-1}}^{2,\tau}), \\
    \quad \widetilde{S}_n &:= \frac{S_n}{\sqrt{V_n}},
    &&\zeta_n := \sum_{i=1}^n \sigma_i(X_{t_{i-1}}^{2,\tau}) \eta_i,
\end{align*}
where $\eta_i$, $1\leq i\leq n$, are i.i.d., $\eta_1\sim \mathcal{N}(0,1) $. It is clear that $\zeta_k\sim \mathcal{N}(0, V_k) $, $1\leq k\leq n$.

By \cite[Th.6.4.5]{AD09}, the strong solution to a jump-driven SDE, in particular, the one defined by $J_i^{small}(x)$,  is a Markov process (the proof we refer to is written for $p=2$, but can be easily adapted for $p\geq 2$).

 Let $\Ff_n:= \sigma(S_n)$. By the Markov property, $(S_n,\Ff_n)$ is the sequence of martingale differences, i.e. $\Ff_n\subset \Ff_{n+1}$, $n\geq 0$, and $\Ee[ S_n |\Ff_{n-1}]=0$.
 Moreover, $\Ee |S_n|^2 <\infty$ and $\Cov(J_i^{small}(X_{t_{i-1}}^{2,\tau}), J_{i+k}^{small}(X_{t_{i+k-1}}^{2,\tau})) =0$, $k\geq 1$.
 Thus, variance of $S_n$ can be expressed as the sum of  variances of the martingale differences:
 \[
 \Var \left[S_n\right] = \sum_{i=1}^n \sigma^2_i (X_{t_{i-1}}^{2,\tau}).
 \]
 Note that $(S_k - \zeta_k)_{k\geq 1}$ is an $\Ff_k$--martingale.

Then by the Doob inequality,
 for $p >  1$ we have
 $$
 \left\|\sup_{t\in [0,T]}
  \sum_{i=1}^{\rho(t)} \left( J_i^{\text{small}} (X_{t_{i-1}}^{2,\tau}) - \sigma_i(X_{t_{i-1}}^{2,\tau})\eta_i \right) \right\|_{L^p(\Omega)} \lesssim \left\|
  \sum_{i=1}^{\rho(T)} \left( J_i^{\text{small}} (X_{t_{i-1}}^{2,\tau}) - \sigma_i(X_{t_{i-1}}^{2,\tau})\eta_i \right) \right\|_{L^p(\Omega)}.
  $$

 Using \eqref{D1}, we can apply the normalization and rewrite $\textbf{(D)}$ as
 \begin{equation}\label{D2}
    \textbf{(D)} \lesssim \sqrt{V_{\rho(T)}} \cdot \Ww_p\left( \widetilde{S}_{\rho(T)}, \vartheta \right), \quad \vartheta\in \mathcal{N}(0,1).
 \end{equation}

 The key point of applying the Wasserstein distance is that we can estimate the right-hand side of \eqref{D2}, using Theorem~\ref{bobkov_wp_bound}:
\begin{equation}\label{D3}
\Ww_p\left( \widetilde{S}_{\rho(T)},  \vartheta  \right) \leq \int_{-\infty}^\infty \left| F_{\widetilde{S}_{\rho(T)}}(x) - \Phi(x) \right|^{\frac{1}{p}} dx,
\end{equation}
where $F_{\widetilde{S}_{\rho(T)}}(x)$ is the distribution function  of $\widetilde{S}_{\rho(T)}$ and  $\Phi(x)$ is the distribution function of standard normal variable.

The expression under the integral on the right-hand side can be estimated by applying the martingale version of Esseen’s theorem, see Theorem~\ref{HJ88}:
\begin{equation}
\left| F_{\widetilde{S}_{\rho(T)}}(x) - \Phi(x) \right| \leq \frac{C_\delta \, L_{\rho(T), 2\delta}^{\frac{1}{3 + 2\delta}}}{1 + |x|^{2 + 2\delta}}, \quad
L_{\rho(T), 2\delta} = \frac{1}{V^{1+\delta}_{\rho(T)}} \cdot \sum_{i=1}^{\rho(T)} \E{\left|J_i^{\text{small}}\right|^{2+2\delta}}.
\end{equation}
Expanding \textbf{(D)} gives us
\begin{align*}
    \textbf{(D)} &\leq C_\delta \sqrt{V_{\rho(T)}} L_{\rho(T), 2\delta}^{\frac{1}{(3+2\delta)p}} \int_{\Real} \frac{1}{\left(1 + |x|^{2+2\delta}\right)^{1/p}} \, dx \\
    &\lesssim V_{\rho(T)}^{\frac{1}{2}\left(1 - \frac{2+2\delta}{(3+2\delta)p}\right)} \left( \sum_{i=1}^{\rho(T)} \E{\left|J_i^{\text{small}}\right|^{2+2\delta}} \right)^{\frac{1}{(3 + 2\delta)p}},
\end{align*}
where we used that under the assumption that  $\delta > \frac{p}{2} - 1$ the integral in the first inequality is convergent.

Applying \ref{Novikov}, since $2+2\delta \ge 2$, we obtain
\begin{align*}
    \sum_{i=1}^{\rho(T)} \E{\left|J_i^{\text{small}}\right|^{2+2\delta}}
    &= \sum_{i=1}^{\rho(T)} \E{\left|\int_{t_{i-1}}^{t_i} \int_{B(s, h, \eps)} c(s, \widetilde{X}_{t_{i-1}}, z) \tN(ds, dz)\right|^{2+2\delta}} \\
    &\lesssim \E{\int_0^T \int_{B(s, h, \eps)} \left| c(s, \widetilde{X}_{\eta(s)}, z) \right|^{2+2\delta} \nu(dz) ds} \\
    &\qquad + \sum_{i=1}^{\rho(T)} \E{\left( \int_{t_{i-1}}^{t_i} \int_{B(s, h, \eps)} \left|c(s, \widetilde{X}_{t_{i-1}}, z)\right|^2 \nu(dz) ds\right)^{1+\delta}} =: I_1 + I_2.
\end{align*}

Based on Lemma \ref{int_c_p_1} and \ref{E_c_p_1}, we can find the upper bounds for $I_1$ and $I_2$:
\begin{align*}
    I_1 &\lesssim \tau^{2+2\delta}(h^\eps) h^{-\eps}, \\
    I_2 &\lesssim \sum_{i=1}^{\rho(T)} \left( \int_{t_{i-1}}^{t_i} \tau^2((sh)^\eps) (sh)^{-\eps} s^{-\sigma} \, ds \right)^{1+\delta}.
\end{align*}

By  \eqref{lam1}, the expected number of large jumps is approximately $ C\,h^{-\eps} $. In particular, if we choose $ h = n^{-1/\eps} $, then the expected number of large jumps is of order $ O(n) $. Recall that  $\rho(T)$ represents the merged grid combining both the regular grid and the jump times.  Since the number of jumps is $O(n)$,  we can first analyze the contribution from the regular grid and then multiply the result by a constant.   With these observations, we can now proceed to bound $ I_2 $ further. We have
\begin{align*}
    I_2
    & \lesssim \left( \tau^2 \left( (Th)^\eps \right) h^{-\eps} \left( \frac{T}{n} \right)^{1-\eps} \right)^{1+\delta} \sum_{i=1}^{n} \left( \int_{i-1}^i s^{-\eps-\sigma} \, ds \right)^{1+\delta} \\
    &\lesssim \frac{ \tau^{2+2\delta}(h^\eps)}{\left(h^\eps \, n^{1-\eps}\right)^{1+\delta}} \sum_{i=1}^{n } \left( \int_{i-1}^i s^{-\eps-\sigma}  \, ds \right)^{1+\delta},
\end{align*}
where in the last line we used that $\tau(\cdot)$ is increasing.

Using the inequality
$
    1 - (1 - x)^a \le 2^{1 - a} a x,
$
valid for $ x \in (0, \tfrac{1}{2}] $, $ a \in (0, 1) $, we obtain
\begin{align*}
    \sum_{i=1}^{n} \left( \int_{i-1}^i s^{-\eps} \, ds \right)^{1+\delta}
    &= \sum_{i=1}^{n } \left( \frac{i^{1-\eps} - (i-1)^{1-\eps}}{1-\eps} \right)^{1+\delta} \\
    &= \sum_{i=1}^{n } \left( \frac{i^{1-\eps}}{1-\eps} \left( 1 - \left( 1 - \frac{1}{i} \right)^{1-\eps} \right)\right)^{1+\delta} \\
    &\leq \frac{1}{1-\eps} + \sum_{i=2}^n \left( \frac{i^{1-\eps}}{1-\eps} \cdot \frac{2^\eps(1-\eps)}{i} \right)^{1+\delta} \\
    &\lesssim \sum_{i=1}^n \frac{1}{i^{\eps(1+\delta)}} \\
    &\lesssim n^{1-\eps(1+\delta)}.
\end{align*}
Summarizing, we get
\begin{align*}
    I_2
    &\lesssim  \frac{ \tau^{2+2\delta}(h^\eps)}{\left(h^\eps \, n^{1-\eps}\right)^{1+\delta}} \, n^{1-(\eps+\sigma)(1+\delta)} = \frac{\tau^{2+2\delta}(h^\eps)}{h^{\eps(1+\delta)}} \, n^{-\delta - \sigma(1+\delta)}.
\end{align*}
Thus,
$$
\sum_{i=1}^{n} \E{\left|J_i^{\text{small}}\right|^{2+2\delta}} \lesssim I_1+I_2 \lesssim  \tau^{2+2\delta} (h^\eps) h^{-\eps}[1+ h^{-\eps \delta }n^{-\delta- \sigma(1+\delta)}].
$$
Choose $h= n^{-1/\eps}$. Then
\begin{equation}
    \sum_{i=1}^{n} \E{\left|J_i^{\text{small}}\right|^{2+2\delta}} \lesssim \tau^{2+2\delta} (n^{-1}) n.
\end{equation}
Using Lemma \ref{E_c_p_1}, we can obtain an upper bound for $V_n$:
\begin{align*}
    V_n^\theta
    &\lesssim \tau^{2\theta}((Th)^{\eps})  \left(T(Th)^{-\eps}\right)^{\theta} \lesssim \tau^{2\theta}(h^\eps) h^{-\eps\theta},
\end{align*}
where
\begin{equation*}
    \theta := \frac{1}{2}\left(1-\frac{2+2\delta}{(3+2\delta)p}\right) > 0.
\end{equation*}

Summarizing, we get
\begin{align*}
  \textbf{(D)} \lesssim \tau(n^{-1}) n^{\frac12 - \frac{\delta}{(3+2\delta)p}}.
\end{align*}

\textbf{Completing the proof.}
Denote
\begin{equation}
\Phi_n(t) := \sup_{0 \leq s \leq \eta(t)} \left\| \widetilde{X}_{\eta(s)} - X^{2,\tau}_{\eta(s)} \right\|_{L^p(\Omega)},
\end{equation}
where the index $n$ emphasizes that  the supremum implicitly depends on $n$  via $\eta(s)$.
Combining the bounds for terms (A)--(D), we have:
\begin{equation}
\Phi_n(t) \lesssim \int_0^{t} \Phi_n(s) ds + \left( \int_0^{t} \Phi^2_n(s) ds \right)^{1/2} + \left( \int_0^{t} \Phi^p_n(s) ds \right)^{1/p} + r(n),
\end{equation}
where $r(n):= \tau(n^{-1}) n^{\frac12 - \frac{\delta}{(3+2\delta)p}}$.

Using Gronwall's inequality (see \ref{gronwall}), we get
\begin{equation}
    \Phi_n(t) \leq C e^{Ct} \cdot r(n)
    \; \implies \;
    \sup_{0 \le t \le T} \Phi_n(t) \lesssim r(n).
\end{equation}

Adding the error from the Euler-Peano approximation, we finally arrive at
\begin{equation}
    \left\| \sup_{0 \leq t \leq T} \left| X_t - X^{2,\tau}_t \right| \right\|_{L^p(\Omega)}
     \lesssim n^{-\left\{ \gamma \wedge \frac{2\zeta}{p(1+\zeta)} \right\}} + \tau(n^{-1}) n^{\frac{1}{2} - \frac{\delta}{(3+2\delta)\,p}}.
\end{equation}

Note that the function
 $f(\delta) = \frac{1}{2} - \frac{\delta}{(3+2\delta)\,p}$  is strictly decreasing  in $\delta$, which can be chosen arbitrarily large. Hence,  choosing $\delta$ large we arrive at
\begin{equation}
    \left\| \sup_{0 \leq t \leq T} \left| X_t - X^{2,\tau}_t \right| \right\|_{L^p(\Omega)}
     \lesssim n^{-\left\{ \gamma \wedge \frac{2\zeta}{p(1+\zeta)} \right\}} + \tau(n^{-1}) n^{\frac{1}{2} - \frac{1}{2p} + \eps^*}
\end{equation}
 for any $0 < \eps^* < \frac{1}{2p}$. Finally,  by \eqref{tau2} we have $\tau(n^{-1})n^{1/2} \lesssim  n^{\frac{\alpha-2}{2\alpha}} \to 0$, $n\to \infty$.

\subsection{Scheme 1: Omitted Small Jumps}
We consider the \emph{no-small-jumps} approximation
\begin{equation*}
    X_t^{1,\tau} = X_0
    + \int_0^t a\bigl(s, X_s^{1,\tau}\bigr)\,ds
    + \int_0^t b\bigl(s, X_s^{1,\tau}\bigr)\,dB_s
    + \int_0^t \!\!\int_{\Real\setminus B(s,h,\eps)} c\bigl(s, X_{s-}^{1,\tau}, z\bigr) \, \tN(ds,dz),
\end{equation*}
where the small jumps of size $ B(s,h,\eps)$) are omitted.

Our goal is to find the upper bound for the strong error
\[
\Bigl\|\sup_{t\in [0,T]} \bigl| X_t - X_t^{1,\tau} \bigr|\Bigr\|_{L^p(\Omega)}, \quad \max(\alpha, 1) <  p < \infty.
\]
As in the $X^{2,\tau}_t$ case, the error is decomposed into two parts, where $\widetilde{X}_t$ is defined in \eqref{X_tilde}:
\begin{equation*}
    \left\| \sup_{t\in [0,T]} \left| X_t - X_t^{1,\tau} \right| \right\|_{L^p(\Omega)} \leq
    \left\| \sup_{t\in [0,T]} \left| X_t - \widetilde{X}_t \right| \right\|_{L^p(\Omega)} +
    \left\| \sup_{t\in [0,T]} \left| \widetilde{X}_{\eta(t)} - X_{\eta(t)}^{1,\tau} \right| \right\|_{L^p(\Omega)}.
\end{equation*}

The upper bound on the first part is already proved in  \eqref{lem31_est}.

Split the difference in the second term into four components:
\[
    \widetilde{X}_{\eta(t)} - X_{\eta(t)}^{1,\tau} = \underbrace{\mathrm{(drift)}}_{\textbf{(A)}}
    + \underbrace{\mathrm{(diffusion)}}_{\textbf{(B)}}
    + \underbrace{\mathrm{(large~jumps)}}_{\textbf{(C)}}
    + \underbrace{\mathrm{(small~jumps)}}_{\textbf{(D)}}.
\]

The drift, diffusion, and large-jump terms \textbf{(A)}, \textbf{(B)}, \textbf{(C)} are treated exactly as in the proof of \eqref{str-eq2}. The only new piece is \textbf{(D)}, the omitted small jumps:
\[
\left\|\int_0^{\eta(t)} \!\!\int_{B(s,h,\eps)} c(s, \widetilde{X}_{\eta(s)}, z) \, \tN(ds,dz)\right\|_{L^p(\Omega)}.
\]

\noindent\emph{Case 1: $p\ge2$.}

By the Kunita BDG (cf.  \eqref{Novikov2}), we
\begin{align*}
\Biggl\| &\int_0^{\eta(t)} \!\!\int_{B(s,h,\eps)} c(s,\widetilde{X}_{\eta(s)},z)\,\tN(ds,dz) \Biggr\|_{L^p(\Omega)}^p \\
&\lesssim \int_0^{\eta(t)} \E{\left( \int_{B(s,h,\eps)} |c(s,\widetilde{X}_{\eta(s)},z)|^2 \nu(dz) \right)^{p/2} } ds \\
&\quad + \E{\int_0^{\eta(t)} \!\!\int_{B(s,h,\eps)} |c(s,\widetilde{X}_{\eta(s)},z)|^p \nu(dz) \,ds}.
\end{align*}

According to Lemma \ref{int_c_p_1} and assumption that  $\sigma+\eps p/2 < 1$:
\begin{align*}
    \int_0^{\eta(t)} \E{\left( \int_{B(s,h,\eps)} |c(s,\widetilde{X}_{\eta(s)},z)|^2 \nu(dz) \right)^{p/2} } ds
    &\lesssim \int_0^{\eta(t)} \frac{(\tau((sh)^\eps))^p s^{-\sigma}}{(sh)^{\eps p/2}} \, ds \\
    &\lesssim \frac{(\tau(h^\eps))^p}{h^{\eps p/2}} \int_0^T s^{-\sigma-\eps p/2} \, ds \\
    &\lesssim \frac{(\tau(h^\eps))^p}{h^{\eps p/2}}.
\end{align*}

According to Lemma \ref{E_c_p_1}, the second term is bounded by
\[
    \E{\int_0^{\eta(t)} \!\!\int_{B(s,h,\eps)} |c(s,\widetilde{X}_{\eta(s)},z)|^p \nu(dz) \,ds}
    \lesssim \frac{\big(\tau(h^\eps)\big)^p}{h^\eps}.
\]

Since $0 < h < 1$, then $\frac{1}{h^{\eps p/2}} > \frac{1}{h^\eps}$. Therefore,
\[
    \frac{(\tau(h^\eps))^p}{h^{\eps p/2}} + \frac{(\tau(h^\eps))^p}{h^\eps} \lesssim \frac{(\tau(h^\eps))^p}{h^{\eps p/2}}.
\]

Taking the $1/p$ power, choosing $h = n^{-1/\eps}$ and applying the extended Gronwall lemma \ref{gronwall}, we obtain
\[
    \Bigl\|\sup_{t\in[0,T]}|\widetilde{X}_{\eta(t)}-X_{\eta(t)}^{1,\tau}|\Bigr\|_{L^p(\Omega)}
    \lesssim \tau(n^{-1}) \sqrt{n}.
\]

\noindent\emph{Case 2: $\max(\alpha, 1) < p<2$.}

We apply Novikov's inequality \eqref{Novikov1} with $a=p$ and Lemma~\ref{E_c_p_2} to obtain
\begin{align*}
    \E{\sup_{t\in[0,T]}\left|\int_0^t \!\!\int_{B(s,h,\eps)} c(s,\widetilde{X}_{\eta(s)},z)\,\tN(ds,dz)\right|^p}
    &\lesssim \E{\int_0^T \!\!\int_{B(s,h,\eps)} \bigl|c(s,\widetilde{X}_{\eta(s)},z)\bigr|^p\,\nu(dz)\,ds} \\
    &\lesssim  \big( \tau(h^\eps) \big)^{p-\alpha}.
\end{align*}

Taking the $1/p$ power and applying the extended Gronwall lemma \ref{gronwall} for $h = n^{1/\eps}$ yields
\[
    \Bigl\|\sup_{t\in[0,T]}\bigl|\widetilde{X}_{\eta(t)}-X_{\eta(t)}^{1,\tau}\bigr|\Bigr\|_{L^p(\Omega)}
    \lesssim (\tau(n^{-1}))^{\frac{p-\alpha}{p}}.
\]

\section{Simulation Algorithms}\label{S-Alg}

The implementation of the algorithms and simulation of the examples described below can be found at: \url{https://github.com/d-platonov/levy-type}.

\begin{algorithm}[H]
    \caption{Simulation of jump times up to time $ T $}
    \label{j-time}
    \begin{algorithmic}[1]
        \State Initialize $\mathcal{T}^{\pm} \gets \emptyset$ \Comment{Set of jump times}
        \State $S \gets 0$ \Comment{Cumulative sum of exponential random variables}
        \State Generate  $E \sim \mathrm{Exp}(1)$
        \State Update $S \gets S + E$
        \State $t_{\mathrm{new}} \gets \bigl(\lambda^{\pm}\bigr)^{-1}(S)$
        \While{$t_{\mathrm{new}} < T$}
        \State $\mathcal{T}^{\pm} \gets \mathcal{T}^{\pm} \cup \{t_{\mathrm{new}}\}$
        \State Generate $E' \sim \mathrm{Exp}(1)$; update $S \gets S + E'$
        \State $t_{\mathrm{new}} \gets \bigl(\lambda^{\pm}\bigr)^{-1}(S)$
        \EndWhile
        \State \Return $\mathcal{T}^{\pm}$
    \end{algorithmic}
\end{algorithm}

\begin{algorithm}[H]
    \caption{Sampling of large jump sizes for given jump times}
    \label{j-size}
    \begin{algorithmic}[1]
        \Require List of jump times $\mathcal{T}^{\pm}$
        \State Initialize $\mathcal{Z}^{\pm} \gets \emptyset$
        \For{$t \in \mathcal{T}^{\pm}$}
        \State Generate $U \sim \mathrm{Uniform}(0,1)$
        \State $Z \gets \pm \bigl(F^{\pm}\bigr)^{-1}(U)$
        \State $\mathcal{Z}^{\pm} \gets \mathcal{Z}^{\pm} \cup \{Z\}$
        \EndFor
        \State \Return $\mathcal{Z}^{\pm}$
    \end{algorithmic}
\end{algorithm}

\begin{algorithm}[H]
\caption{Simulate a single L\'evy-type path}
\label{bench_path}
\begin{algorithmic}[1]
\Require Simulation parameters: $n$, $T$, $\eps$, $h$, $x_0$, $\alpha$ (optional)
\Require SDE coefficients: drift $a(t,x)$, diffusion $b(t,x)$, jump $c(t,x,z)$
\Require Inverse functions: $\lambda^{-1}(u)$ (jump intensity) and $F^{-1}(u,t)$ (large-jump CDF)
\Require Expression for small-jump variance: $\sigma^2_{\text{small}}(x_{t_{i-1}},t_{i-1},t_i)$
\State \textbf{Generate regular grid:} $\{t_0, t_1, \dots, t_n\} \gets \text{linspace}(0, T, n)$
\State \textbf{Generate jump data:} Use \ref{j-time} and \ref{j-size} to generate positive and negative jumps.
\State \textbf{Merge grids:} $ \mathcal{T} \gets \{t_0, t_1, \dots, t_n\} \cup \{\text{positive jump times}\} \cup \{\text{negative jump times}\} $
\State Sort $\mathcal{T}$ in increasing order.
\State For each $t \in \{t_0, t_1, \dots, t_n\} \subset \mathcal{T}$, set the corresponding jump size $z=0$.
\State \textbf{Initialize:} Set $X_{t_0} \gets x_0$.
\For{$i = 1$ to $|\mathcal{T}|-1$}
    \State $\Delta t \gets t_i - t_{i-1}$
    \State \textbf{Drift increment:}
    $
    \Delta X_{\text{drift}} \gets a\bigl(t_{i-1},X_{t_{i-1}}\bigr) \Delta t
    $
    \State \textbf{Diffusion increment:}
    $
    \Delta X_{\text{diff}} \gets b\bigl(t_{i-1},X_{t_{i-1}}\bigr) \sqrt{\Delta t}\, Z,\quad Z\sim \mathcal{N}(0,1)
    $
    \State \textbf{Large-jump increment:}
    \[
    \Delta X_{\text{jump}} \gets
    \begin{cases}
      c\bigl(t_{i-1},X_{t_{i-1}}, z_{t_{i-1}}\bigr), & \text{if a jump occurs at } t_i,\\[1mm]
      0, & \text{otherwise}.
    \end{cases}
    \]
    \State \textbf{Small-jump increment:}
    \[
    \Delta X_{\text{small}} \gets \sqrt{v} Z', \; Z'\sim \mathcal{N}(0,1), \;  v \gets \sigma^2_{\text{small}}\bigl(X_{t_{i-1}}, t_{i-1}, t_i\bigr).
    \]
    \State \textbf{Update:}
    $
    X_{t_i} \gets X_{t_{i-1}} + \Delta X_{\text{drift}} + \Delta X_{\text{diff}} + \Delta X_{\text{jump}} + \Delta X_{\text{small}}
    $
\EndFor
\State \Return $\{X_{t_i}\}_{i=0}^{|\mathcal{T}|-1}$
\end{algorithmic}
\end{algorithm}

\section{Numerical Examples and Comparisons}\label{Sim}

We compare  two approaches: AR -- the one with a fixed cutting level $\eps$, and DC -- a new dynamic cutting method described in this paper. In this section, we illustrate the performance of these two cutting schemes. In order to  compare our results with those obtained in  \cite{BM24}, we choose the same SDE:
\[
\begin{cases}
    dX_t = \sin\bigl(X_t\bigr)\,dt +\int_{\Real}\cos(X_{t-})\,z \,\tN(dt,dz),
    \\[6pt]
    X_0 = 0, \quad t \in [0, 1],
\end{cases}
\]
where $\tN(dt,dz)$ is the compensated Poisson random measure, corresponding to a L\'evy process with L\'evy measure
\[
\nu(dz) = \mathbf{1}_{\{|z|\leq 1\}} \frac{dz}{|z|^{1+\alpha}}.
\]

We generate a “benchmark” path of $2^{17}$ points; then, for coarser grids ranging from $2^9$ to $2^{16}$ points, we measure the \emph{strong errors} in $L^p(\Omega)$-norm, for $p=2,4,6,8,10$:
\[
\Bigl\|\sup_{0\le t\le 1} |X_t^{\mathrm{benchmark}} - X_t^{\mathrm{coarse}}|\Bigr\|_{L^p(\Omega)}.
\]

\textbf{DC Construction}. We have
\[
    N^+(r) = \int_r^\infty \nu(dz) = \frac{-1 + r^{-\alpha}}{\alpha}, \quad r < 1,
\]
\[
    \tau^+(t) = \Big\{ r: N^+(r) = \frac{1}{t} \Big\} = \left( \frac{\alpha + t}{t} \right)^{-1/\alpha}, \quad t > 0,
\]
\begin{align*}
    \sigma^2_{\text{small}}\bigl(X_{t_{i-1}}, t_{i-1}, t_i\bigr)
    &= 2 \int_{t_{i-1}}^{t_i} \!\int_0^{\tau((sh)^\eps)} \left(\cos(X_{t_{i-1}}) \, z \right)^2 \, \nu(dz) \, ds \\
    &\approx 2 \cos^2\left(X_{t_{i-1}}\right) (t_i - t_{i-1}) \frac{\tau\Bigl((t_{i-1} h)^\eps\Bigr)^{2-\alpha}}{2-\alpha}.
\end{align*}

\textbf{Simulation Parameters}:
\begin{itemize}
    \item \textbf{Benchmark grid:} $2^{17}$ steps.
    \item \textbf{Coarse grids:} $2^k$ steps for $k=9,\dots,16.$
    \item \textbf{Monte Carlo loops:} 100.
    \item \textbf{Number of trajectories per loop:} 1000.
\end{itemize}

We perform 3 groups of simulations: 1) $\alpha=0.5$, 2) $\alpha=1.0$, 3) $\alpha=1.5$. For each of them we choose $\eps_{AR}=0.01$. In the DC scheme, we choose $\eps_{DC}=0.1$; $h$ is chosen in such a way that the small-jump variance is the same as in the AR method.
\begin{itemize}
    \item \textbf{$\alpha=0.5$:} $\eps_{AR} = 0.01, \; \eps_{DC} = 0.1, \; h = 6.810 \times 10^{-13}$
    \item \textbf{$\alpha=1.0$:} $\eps_{AR} = 0.01, \; \eps_{DC} = 0.1, \; h = 2.877 \times 10^{-20}$
    \item \textbf{$\alpha=1.5$:} $\eps_{AR} = 0.01, \; \eps_{DC} = 0.1, \; h = 1.575 \times 10^{-28}$
\end{itemize}

Below, we have the calculated strong errors and log-scaled plots for comparison.

\begin{figure}[H]
    \centering
    \includegraphics[width=0.65\textwidth]{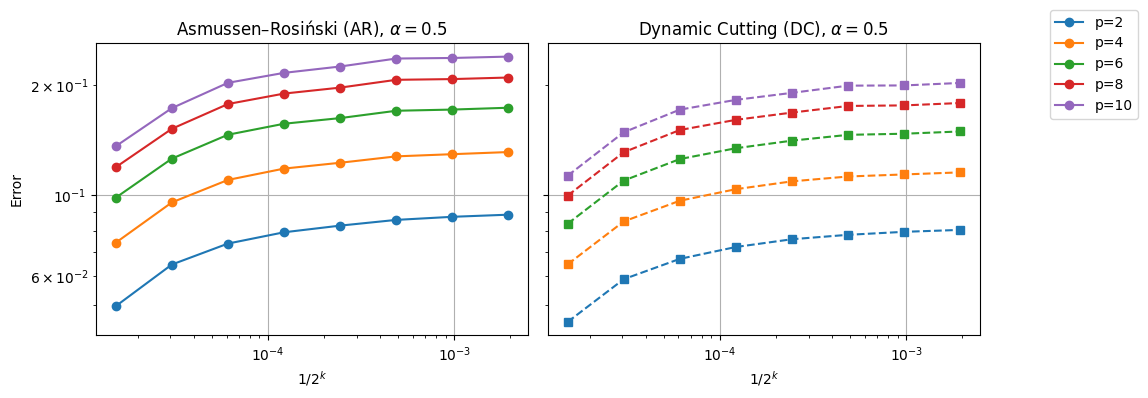}
    \caption{Strong errors for $\alpha = 0.5$ using AR and DC methods.}
    \label{alpha_05}
\end{figure}

\begin{figure}[H]
    \centering
    \includegraphics[width=0.65\textwidth]{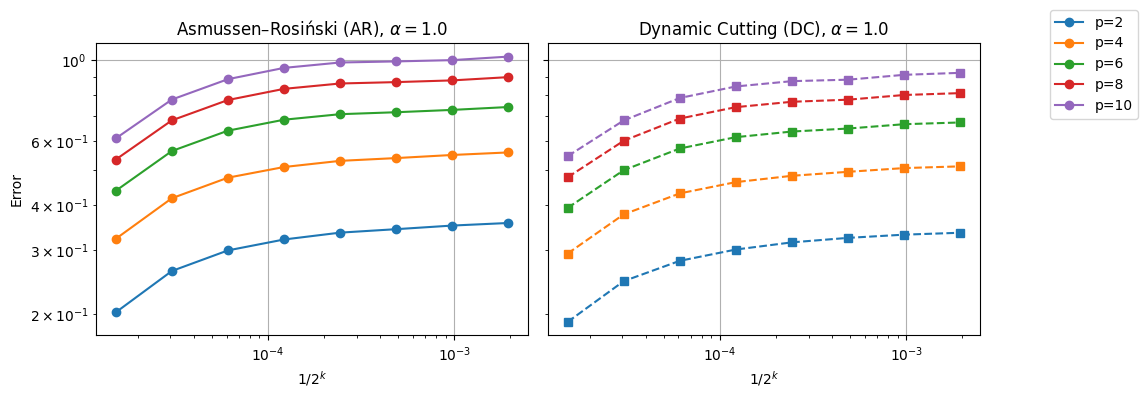}
    \caption{Strong errors for $\alpha = 1.0$ using AR and DC methods.}
    \label{alpha_10}
\end{figure}

\begin{figure}[H]
    \centering
    \includegraphics[width=0.65\textwidth]{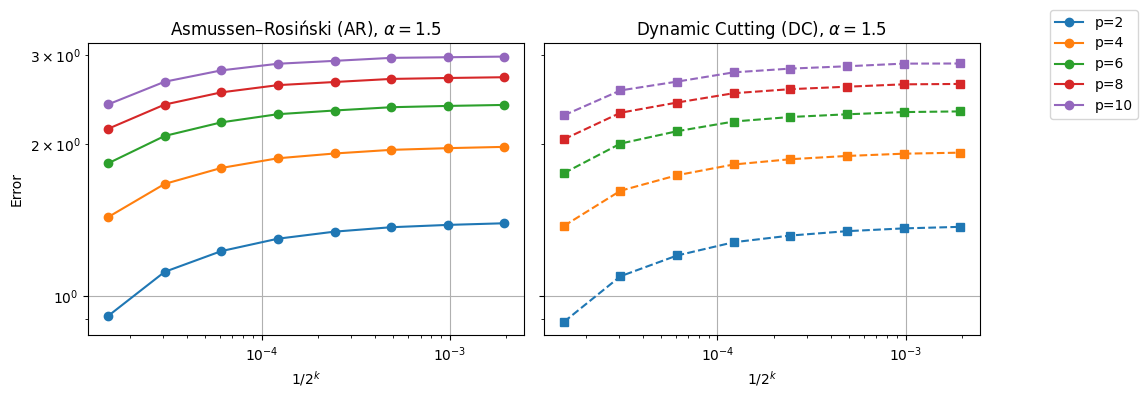}
    \caption{Strong errors for $\alpha = 1.5$ using AR and DC methods.}
    \label{alpha_15}
\end{figure}

\begin{table}[H]
    \centering
    \caption{AR--DC values for varying $k$ (rows) and $p$ (columns). Each cell reports values for $\alpha = 0.5$ (top), $\alpha = 1.0$ (middle), and $\alpha = 1.5$ (bottom).}\label{Tab1}
    \begin{tabular}{|c|c|c|c|c|c|}
        \hline
        \backslashbox{$k$}{$p$} & 2    & 4    & 6    & 8    & 10   \\
        \hline
        16  & \makecell{0.004697 \\ 0.011908 \\ 0.024332}
        & \makecell{0.009440 \\ 0.028674 \\ 0.056420}
        & \makecell{0.014893 \\ 0.044859 \\ 0.080354}
        & \makecell{0.019510 \\ 0.056620 \\ 0.099960}
        & \makecell{0.023049 \\ 0.064859 \\ 0.116611} \\
        \hline
        15  & \makecell{0.005692 \\ 0.016352 \\ 0.022110}
        & \makecell{0.010794 \\ 0.040496 \\ 0.051140}
        & \makecell{0.016229 \\ 0.064247 \\ 0.071091}
        & \makecell{0.020813 \\ 0.082767 \\ 0.087895}
        & \makecell{0.024397 \\ 0.096565 \\ 0.103538} \\
        \hline
        14  & \makecell{0.006770 \\ 0.019122 \\ 0.024264}
        & \makecell{0.013514 \\ 0.044997 \\ 0.058592}
        & \makecell{0.020774 \\ 0.068273 \\ 0.087603}
        & \makecell{0.026792 \\ 0.086078 \\ 0.115085}
        & \makecell{0.031517 \\ 0.099598 \\ 0.140047} \\
        \hline
        13  & \makecell{0.007003 \\ 0.019759 \\ 0.020785}
        & \makecell{0.014376 \\ 0.046322 \\ 0.052462}
        & \makecell{0.022395 \\ 0.071730 \\ 0.075594}
        & \makecell{0.028880 \\ 0.091236 \\ 0.094709}
        & \makecell{0.033767 \\ 0.105668 \\ 0.111269} \\
        \hline
        12  & \makecell{0.006773 \\ 0.019989 \\ 0.023732}
        & \makecell{0.013534 \\ 0.047694 \\ 0.049471}
        & \makecell{0.021497 \\ 0.073909 \\ 0.068252}
        & \makecell{0.028612 \\ 0.094288 \\ 0.085567}
        & \makecell{0.034333 \\ 0.109841 \\ 0.101892} \\
        \hline
        11  & \makecell{0.007666 \\ 0.018371 \\ 0.024347}
        & \makecell{0.015193 \\ 0.044797 \\ 0.053486}
        & \makecell{0.023921 \\ 0.070594 \\ 0.075387}
        & \makecell{0.031346 \\ 0.091726 \\ 0.094267}
        & \makecell{0.037093 \\ 0.108467 \\ 0.111581} \\
        \hline
        10  & \makecell{0.007872 \\ 0.019638 \\ 0.021888}
        & \makecell{0.015492 \\ 0.043695 \\ 0.049074}
        & \makecell{0.024168 \\ 0.063193 \\ 0.066032}
        & \makecell{0.031694 \\ 0.077715 \\ 0.077300}
        & \makecell{0.037656 \\ 0.088903 \\ 0.085588} \\
        \hline
        9  & \makecell{0.008080 \\ 0.021382 \\ 0.022657}
        & \makecell{0.015750 \\ 0.047120 \\ 0.051622}
        & \makecell{0.024048 \\ 0.069267 \\ 0.069708}
        & \makecell{0.031078 \\ 0.086520 \\ 0.082181}
        & \makecell{0.036547 \\ 0.099985 \\ 0.092554} \\
        \hline
    \end{tabular}
\end{table}

\appendix

\section{Auxiliary Lemmas}

\begin{lemma}\label{int_c_p_1}
    For any $p \ge 2$,
    \[
        \int_{B(s,h,\eps)} \left| c(s,x,z) \right|^p \, \nu(dz)
        \lesssim (1 + |x|)^p s^{-\sigma}  \frac{\big(\tau((sh)^\eps)\big)^p}{(sh)^\eps},
    \]
   where $B(s,h,\eps)$  and $\tau(t)$ are defined in  \eqref{Bsh}, and \eqref{tau}, respectively.
\end{lemma}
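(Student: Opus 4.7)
The plan is to combine the pointwise bound from Assumption \ref{NC} with a tail-integration estimate for the truncated $p$-th moment of $\nu$. Since $B(s,h,\eps)\subset\{|z|\le 1\}$ for $(sh)^\eps$ small (the only relevant regime), Assumption \ref{NC} gives
\[ |c(s,x,z)|^p \;\lesssim\; s^{-\sigma p}(1+|x|)^p\,|z|^p, \]
so everything reduces to controlling $\int_{B(s,h,\eps)}|z|^p\,\nu(dz)$, with the $s^{-\sigma p}(1+|x|)^p$ prefactor carrying the $s$- and $x$-dependence of the statement.

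For the remaining integral, write $R^\pm := \tau^\pm((sh)^\eps)$. Using the layer-cake representation (equivalently, integration by parts in $z$):
\[ \int_0^{R^+} z^p\,\nu^+(dz) = p\int_0^{R^+} z^{p-1}\bigl[N^+(z)-N^+(R^+)\bigr]\,dz \le p\int_0^{R^+} z^{p-1}N^+(z)\,dz. \]
From Assumption \ref{N}.2 together with the standard Pruitt bound $\psi^{L,+}(1/z)\lesssim z^{-\alpha^+}$, we have $N^+(z)\lesssim z^{-\alpha^+}$ on $(0,1]$. Since $p\ge 2>\alpha^+$, the last integral is bounded by $\tfrac{p}{p-\alpha^+}(R^+)^{p-\alpha^+}$. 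The negative half-line is handled symmetrically, giving $\int_{B(s,h,\eps)}|z|^p\,\nu(dz)\lesssim (R^+)^{p-\alpha^+}+(R^-)^{p-\alpha^-}$.

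To convert the exponent $p-\alpha^\pm$ into the target $(sh)^{-\eps}$, I would use the exact identity $N^\pm(R^\pm)=(sh)^{-\eps}$, which follows from the right-continuity of $N^\pm$ (Assumption \ref{N}.1) together with the definition of the generalized inverse $\tau^\pm$. Combined with the lower Pruitt bound $N^\pm(r)\gtrsim r^{-\alpha^\pm}$ from Assumption \ref{N}.2, this gives $(R^\pm)^{-\alpha^\pm}\lesssim (sh)^{-\eps}$. Since also $R^\pm\le\tau((sh)^\eps)$,
\[ (R^\pm)^{p-\alpha^\pm} = (R^\pm)^p\,(R^\pm)^{-\alpha^\pm} \;\lesssim\; \tau((sh)^\eps)^p\,(sh)^{-\eps}, \]
which combined with the pointwise bound delivers the claim.

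The argument is essentially routine tail calculus. The only quantitative constraint is $p>\alpha$, needed so that the truncated $p$-th moment yields the clean power $(R^\pm)^{p-\alpha^\pm}$ rather than a logarithmically divergent contribution; this is automatic since $p\ge 2$ and $\alpha<2$. The one delicate step is the identification $(R^\pm)^{-\alpha^\pm}\lesssim (sh)^{-\eps}$, which genuinely uses the lower half of the two-sided Pruitt comparison in Assumption \ref{N}.2; an upper bound on $N^\pm$ alone would not allow us to dominate $(R^\pm)^{-\alpha^\pm}$.
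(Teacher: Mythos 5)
Your reduction to the truncated moment $\int_{B(s,h,\eps)}|z|^p\,\nu(dz)$ and the layer-cake bound $\int_0^{R^+}z^p\,\nu^+(dz)\lesssim (R^+)^{p-\alpha^+}$ are fine (this is essentially the computation the paper performs in Lemma~\ref{int_c_p_2}). The gap is in the conversion step. To pass from $(R^\pm)^{p-\alpha^\pm}$ to $\tau((sh)^\eps)^p(sh)^{-\eps}$ you invoke a lower bound $N^\pm(r)\gtrsim r^{-\alpha^\pm}$, which you attribute to ``the lower half of the two-sided Pruitt comparison in Assumption~\ref{N}.2''. But that assumption compares $N^\pm(r)$ with $\psi^{L,\pm}(1/r)$, not with a power law; the only power-law information in the paper is the \emph{upper} bound $N^\pm(r)\le r^{-\alpha_\pm}$ appearing as the hypothesis of \eqref{tau2}. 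That upper bound gives $(sh)^{-\eps}=N^\pm(R^\pm)\le (R^\pm)^{-\alpha^\pm}$, i.e.\ exactly the wrong direction, so the chain $(R^\pm)^{p-\alpha^\pm}=(R^\pm)^p(R^\pm)^{-\alpha^\pm}\lesssim\tau((sh)^\eps)^p(sh)^{-\eps}$ is not justified under the stated hypotheses. (Indeed, when only $N(r)\le r^{-\alpha}$ is known, $\tau^{p-\alpha}$ is in general a \emph{weaker} bound than $\tau^p N(\tau)$, so the implication cannot go the way you need.)

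The paper's proof avoids power laws altogether: with $\xi:=1/\tau((sh)^\eps)$ it writes $\int_{|z|\le\tau((sh)^\eps)}|z|^p\nu(dz)=\xi^{-p}\int_{|z\xi|\le1}|z\xi|^p\nu(dz)\le\xi^{-p}\int_{|z\xi|\le1}|z\xi|^2\nu(dz)=\xi^{-p}\psi^L(\xi)$, using only $p\ge2$; then Assumption~\ref{N}.2 gives $\psi^L(\xi)\asymp N(1/\xi)=N(\tau((sh)^\eps))\lesssim(sh)^{-\eps}$, the last step by right continuity ($N^\pm(\tau^\pm(t))=1/t$) and monotonicity of $N^\pm$. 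You could repair your route in the same spirit, e.g.\ by bounding $p\int_0^{R}t^{p-1}N^+(t)\,dt\le pR^{p-2}\int_0^{R}tN^+(t)\,dt$ and identifying the latter integral with the Pruitt function, but as written your proof relies on a two-sided power-law bound the paper does not assume. A minor further point: Assumption~\ref{NC} actually yields the prefactor $s^{-\sigma p}$, not the $s^{-\sigma}$ appearing in the statement (the paper's own proof silently makes the same substitution), so be explicit about which power of $s^{-\sigma}$ you are claiming.
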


\begin{proof}
    Define $\xi := \frac{1}{\tau((sh)^\eps)}$. For $p \ge 2$, by Assumption \eqref{NC} and definition \eqref{Set15}, we have
    \begin{align*}
        \int_{B(s,h,\eps)} \left| c(s,x,z) \right|^p \, \nu(dz)
        &\lesssim  s^{-\sigma}  \int_{B(s,h,\eps)} (1 + |x|)^p |z|^p \, \nu(dz) \\
        &= (1 + |x|)^p s^{-\sigma}  \int_{|z| \le \tau((sh)^\eps)} |z|^p \, \nu(dz) \\
        &= \frac{(1 + |x|)^p}{\xi^p} s^{-\sigma}   \int_{|z\xi| \le 1} |z\xi|^p \, \nu(dz) \\
        &\le \frac{(1 + |x|)^p}{\xi^p} s^{-\sigma}  \int_{|z\xi| \le 1} |z\xi|^2 \, \nu(dz) \\
        &\lesssim \frac{(1 + |x|)^p}{\xi^p} s^{-\sigma}  \psi^L(\xi),
    \end{align*}
    where we used that$|z\xi|^p \le |z\xi|^2$ since $|z\xi| \le 1$.

Since   $N^\pm(\xi) \asymp \psi^{L,\pm}(1/\xi)$ (cf. Assumpion~\ref{N}), we  obtain
    \begin{align*}
        \int_{B(s,h,\eps)} \left| c(s,x,z) \right|^p \, \nu(dz)
        &\lesssim \frac{(1 + |x|)^p s^{-\sigma} }{\xi^p} N(1/\xi) \\
        &= (1 + |x|)^p s^{-\sigma}  \big(\tau((sh)^\eps)\big)^p N\big(\tau((sh)^\eps)\big) \\
        &= (1 + |x|)^p s^{-\sigma}   \frac{\big(\tau((sh)^\eps)\big)^p}{(sh)^\eps},
    \end{align*}
   where $ \psi^L(\xi)$ is defined in  \eqref{Set15}.
\end{proof}

\begin{lemma}\label{int_c_p_2}
Let $\alpha\in (0,2)$ be from \eqref{alpha}.
    Then, for any $p > \alpha$, we have
    \[
        \int_{B(s,h,\eps)} |c(s,x,z)|^p \, \nu(dz)
        \lesssim (1 + |x|)^p s^{-\sigma}   \big(\tau((sh)^\eps)\big)^{p-\alpha}.
    \]
    where $B(s,h,\eps)$  and $\tau$ are defined in \eqref{Bsh} and \eqref{tau}, respectively.
\end{lemma}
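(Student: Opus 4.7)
The plan is to follow the same opening move as in Lemma \ref{int_c_p_1}, namely to use Assumption \ref{NC} to pull out the factor $(1+|x|)^p s^{-\sigma}$ and reduce matters to estimating $\int_{B(s,h,\eps)} |z|^p \, \nu(dz)$. From there the proofs must diverge: the earlier proof leveraged $|z\xi|^p \le |z\xi|^2$ on $|z\xi|\le 1$ (requiring $p\ge 2$) to recognize the Pruitt function $\psi^L$. Here we only have $p>\alpha$, so instead I will exploit the polynomial tail $N^\pm(r) \lesssim r^{-\alpha_\pm}$ on $(0,1]$ from Assumption \ref{N}.2 / \eqref{tau2} directly.

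Concretely, I would split the remaining integral by the sign of $z$ and apply the layer-cake (or integration-by-parts) formula on each piece, e.g.\ for the positive half
\[
\int_0^{\tau^+((sh)^\eps)} z^p \, \nu^+(dz) \;\le\; p \int_0^{\tau^+((sh)^\eps)} r^{p-1} N^+(r)\,dr.
\]
Since $p>\alpha\ge \alpha_+$, the integrand $r^{p-1}\cdot r^{-\alpha_+}$ is integrable at $0$, and a direct computation yields
\[
p \int_0^{\tau^+((sh)^\eps)} r^{p-1-\alpha_+}\,dr \;=\; \frac{p}{p-\alpha_+}\,\tau^+((sh)^\eps)^{p-\alpha_+}.
\]
The negative half is symmetric. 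Then I convert the $\pm$ exponents to the common $\alpha$: because $\tau((sh)^\eps) \le 1$ for $(sh)^\eps$ sufficiently small, and $p-\alpha \le p-\alpha_\pm$, monotonicity of $r\mapsto r^{p-\alpha}$ on $(0,1]$ gives $\tau^\pm((sh)^\eps)^{p-\alpha_\pm} \le \tau((sh)^\eps)^{p-\alpha}$. Substituting back produces the claimed bound.

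The only real obstacle is the bookkeeping at the boundary $\tau((sh)^\eps) \ge 1$, where the polynomial tail bound does not apply globally and Assumption \ref{NC} itself only controls $c(s,x,z)$ for $|z|\le 1$. One handles this by splitting the $z$-integral at $|z|=1$: the inner piece is controlled as above, while the outer piece $\int_{1<|z|\le \tau((sh)^\eps)} |z|^p \,\nu(dz)$ is a uniformly bounded quantity in the relevant parameter regime (since $\nu$ is finite away from the origin, and in the asymptotic regime $h=n^{-1/\eps}\downarrow 0$ of interest this piece is vacuous for $sh$ small enough). This edge contribution is absorbed into the implicit constant in $\lesssim$, and the stated inequality follows.
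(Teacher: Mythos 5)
Your proposal is correct and follows essentially the same route as the paper's proof: apply Assumption \ref{NC} to extract $(1+|x|)^p s^{-\sigma}$, then use the layer-cake representation of $\int |z|^p\,\nu(dz)$ together with the polynomial tail bound on $N^{\pm}$ and the condition $p>\alpha$ to integrate $r^{p-1-\alpha}$ near zero. The paper simply enlarges $B(s,h,\eps)$ to the symmetric interval $(-\tau((sh)^\eps),\tau((sh)^\eps))$ and works with the combined tail $N(t)\le t^{-\alpha}$ rather than treating the two signs and $\alpha_\pm$ separately; your extra care at the boundary $|z|>1$ is a harmless refinement of the same argument.
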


\begin{proof}
    Consider the inclusion
    \[
        B(s,h,\eps) \subset \widetilde{B}(s,h,\eps) := \big\{ z: -\tau((sh)^\eps) < z < \tau((sh)^\eps) \big\}.
    \]
    Using Assumption \ref{NC} and applying Fubini–Tonelli theorem, we obtain
    \begin{align*}
        \int_{B(s,h,\eps)} |c(s,x,z)|^p \, \nu(dz)
        &\lesssim (1 + |x|)^p  s^{-\sigma} \int_{B(s,h,\eps)} |z|^p \, \nu(dz) \\
        &\leq (1 + |x|)^p s^{-\sigma}  \int_{\widetilde{B}(s,h,\eps)} |z|^p \, \nu(dz) \\
        &= (1 + |x|)^p s^{-\sigma}  \int_0^{\tau((sh)^\eps)} p \, t^{p-1} \, \nu\big(\{ z\in \real:  t < |z| \le \tau((sh)^\eps) \}\big) \, dt \\
        &\leq (1 + |x|)^p s^{-\sigma}  \int_0^{\tau((sh)^\eps)} p \, t^{p-1} \, \frac{1}{t^\alpha} \, dt \\
        &= (1 + |x|)^p s^{-\sigma}  \frac{p}{p-\alpha} \big( \tau((sh)^\eps) \big)^{p-\alpha}.\qedhere
    \end{align*}
\end{proof}

\begin{lemma}\label{E_c_p_1}
    Let $X_t$ be a L\'evy-type process.
    Then for any $p \ge 2$
    \[
        \E{\int_0^t \!\! \int_{B(s,h,\eps)} \big| c(s,X_s,z) \big|^p \, \nu(dz) \, ds}
        \lesssim \frac{\big(\tau(h^\eps)\big)^p}{h^\eps},\qedhere
    \]
    where $B(s,h,\eps)$  and $\tau(s)$ are defined in \eqref{Bsh} and \eqref{tau}, respectively.
\end{lemma}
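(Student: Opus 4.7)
The plan is to reduce the pointwise-in-$(\omega,s)$ integrand to the bound from Lemma~\ref{int_c_p_1} and then process the remaining integral in $s$ by exploiting the monotonicity and polynomial growth of $\tau$. Applying Lemma~\ref{int_c_p_1} inside the expectation gives
\[
\E{\int_0^t \int_{B(s,h,\eps)} |c(s,X_s,z)|^p \,\nu(dz)\,ds}
\lesssim \int_0^t \E{(1+|X_s|)^p}\,s^{-\sigma}\,\frac{\bigl(\tau((sh)^\eps)\bigr)^p}{(sh)^\eps}\,ds.
\]
At this point I would invoke the standard moment estimate $\sup_{s\in[0,T]}\E{(1+|X_s|)^p}<\infty$, which under Assumptions~\ref{C}, \ref{Int}, \ref{Ini} follows by a Gronwall argument on \eqref{BM_SDE} together with the Novikov/Kunita inequality for the jump integral. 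This absorbs the $\omega$-dependence into the implicit constant.

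Next, since $\tau$ is nondecreasing and $s\le T$, the growth estimate \eqref{tau2} yields
\[
\tau((sh)^\eps)\le \tau((Th)^\eps)\lesssim T^{\eps/\alpha}\,h^{\eps/\alpha}\lesssim \tau(h^\eps)
\]
for a fixed horizon $T$, so the integrand collapses to
\[
(\tau(h^\eps))^p\, h^{-\eps}\int_0^t s^{-\sigma-\eps}\,ds.
\]
The condition $\sigma+\eps p/2<1$ imposed in Theorem~\ref{tstr} forces $\sigma+\eps<1$ (since $p\ge 2$), so the remaining time integral is uniformly bounded in $t\in[0,T]$. Combining these estimates delivers the claimed bound $(\tau(h^\eps))^p/h^\eps$.

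The only nonroutine step is the uniform moment bound on $|X_s|$: one must verify that the Lipschitz and integrability assumptions, together with the compactness of the initial datum, yield $\sup_{s\le T}\E{|X_s|^p}<\infty$ for the relevant range of $p$. This reduces to a Gronwall-type estimate on $f(t):=\sup_{s\le t}\E{|X_s|^p}$ after controlling the jump integral via Kunita's BDG for $p\ge 2$ and using integrability of $M_p(\cdot)$ from Assumption~\ref{Int}. All other steps are routine: the comparison $\tau((sh)^\eps)\lesssim \tau(h^\eps)$ is monotonicity plus a $T$-dependent rescaling, and the $s$-integral bound is elementary under $\sigma+\eps<1$.
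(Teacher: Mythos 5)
Your overall strategy coincides with the paper's proof: apply Lemma~\ref{int_c_p_1} under the expectation, pull out $\tau((sh)^\eps)\lesssim\tau(h^\eps)$, use a uniform $p$-th moment bound on $X$, and integrate $s^{-\sigma-\eps}$ over $[0,T]$. The paper obtains the moment bound by citing \cite[Th.4.1]{K17} (in the form $\E{\sup_{s\le t}|X_s|^p}\lesssim 1$ under Assumption~\ref{Ini}) rather than re-deriving it by Gronwall, but that is an inessential difference, and your remark that the hypothesis $\sigma+\eps p/2<1$ of Theorem~\ref{tstr} guarantees $\sigma+\eps<1$ for $p\ge2$ correctly identifies where the convergence of the time integral comes from.

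There is, however, one step whose justification would fail as written: the chain
\[
\tau((Th)^\eps)\lesssim T^{\eps/\alpha}h^{\eps/\alpha}\lesssim\tau(h^\eps).
\]
The estimate \eqref{tau2} is one-sided; it gives only the upper bound $\tau(t)\lesssim t^{1/\alpha}$, so the final inequality $h^{\eps/\alpha}\lesssim\tau(h^\eps)$ would require a matching \emph{lower} bound on $\tau$, which does not follow from Assumption~\ref{N} or from \eqref{tau2}. (If $T\le1$ monotonicity alone suffices, but for general $T$ your argument is circular.) The correct tool is the scaling property $\tau(Rt)\le R^{\zeta}\tau(t)$ of Theorem~\ref{tau_scale}, packaged as Corollary~\ref{CorB7}: with $R=\max(2,T^\eps)$ one gets $\tau((sh)^\eps)\le\tau(Rh^\eps)\le R^{\zeta}\tau(h^\eps)\lesssim\tau(h^\eps)$ directly, without ever invoking a power-law lower bound. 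Replacing your appeal to \eqref{tau2} by this corollary closes the gap; everything else in your argument is sound.
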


\begin{proof}
    We use the result obtained in Lemma \ref{int_c_p_1}, Corollary~\ref{CorB7} to  Theorem \ref{tau_scale} and
    \cite[Th.4.1.]{K17}, where it is shown that  $\E{\sup_{s \in [0,t]} |X_s|^p} \lesssim 1$ if $X_0$ belongs to some  compact set $K$,
    \begin{align*}
        \E{\int_0^t \!\! \int_{B(s,h,\eps)} \big| c(s,X_s,z) \big|^p \, \nu(dz) \, ds}
        &\lesssim \E{\int_0^t \big(1 + |X_s|\big)^p s^{-\sigma}  \frac{\big(\tau((sh)^\eps)\big)^p}{(sh)^\eps} \, ds} \\
        &\lesssim \frac{\big(\tau(h^\eps)\big)^p}{h^\eps} \, \E{\int_0^t \big(1 + |X_s|\big)^p s^{-\sigma-\eps} \, ds} \\
        &\lesssim \frac{\big(\tau(h^\eps)\big)^p}{h^\eps} \, \E{\sup_{s \in [0,t]} \big(1 + |X_s|\big)^p} \int_0^t s^{-\sigma-\eps} \, ds \\
        &\lesssim \frac{\big(\tau(h^\eps)\big)^p}{(th)^\eps} \, \frac{t^{1-\sigma}}{1-\eps-\sigma} \\
        &\lesssim \frac{\big(\tau(h^\eps)\big)^p}{h^\eps}.\qedhere
    \end{align*}
\end{proof}

\begin{lemma}\label{E_c_p_2}
   Let $\alpha\in (0,2)$ be from \eqref{alpha}.
    Then for any $p > \max(\alpha, 1)$
    \[
        \E{\int_0^t \!\! \int_{B(s,h,\eps)} \big| c(s,X_s,z) \big|^p \, \nu(dz) \, ds}
        \lesssim  \big( \tau(h^\eps) \big)^{p-\alpha},
    \]
    where $B(s,h,\eps)$  and $\tau(s)$ are  defined in \eqref{Bsh} and \eqref{tau}, respectively.
\end{lemma}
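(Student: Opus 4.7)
\textbf{Plan for Lemma \ref{E_c_p_2}.}

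The proof should mirror the argument given for Lemma \ref{E_c_p_1}, but now invoking the pointwise bound from Lemma \ref{int_c_p_2} (which is tailored to the range $p > \alpha$) in place of the one from Lemma \ref{int_c_p_1}. Concretely, the plan is first to apply Lemma \ref{int_c_p_2} inside the expectation to get
\[
    \E{\int_0^t \!\!\int_{B(s,h,\eps)} \bigl| c(s,X_s,z)\bigr|^p \,\nu(dz)\,ds}
    \;\lesssim\; \E{\int_0^t (1+|X_s|)^p \, s^{-\sigma}\,\bigl(\tau((sh)^\eps)\bigr)^{p-\alpha}\,ds}.
\]
Since $p-\alpha>0$ and $\tau$ is non-decreasing, for $s\in[0,T]$ we have $\tau((sh)^\eps)\le \tau((Th)^\eps)\lesssim \tau(h^\eps)$, so the factor $\bigl(\tau(h^\eps)\bigr)^{p-\alpha}$ can be pulled out of the time integral.

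Next I would separate the remaining ingredients exactly as in the proof of Lemma \ref{E_c_p_1}: use the moment estimate $\E{\sup_{s\in[0,t]}(1+|X_s|)^p}\lesssim 1$, which is available from \cite[Th.4.1.]{K17} under Assumption \ref{Ini} (and the assumed integrability of $c$), then handle the deterministic time integral
\[
    \int_0^t s^{-\sigma}\,ds \;=\; \frac{t^{1-\sigma}}{1-\sigma}\;\lesssim\;1,
\]
which is finite because $\sigma\in(0,1)$ by Assumption \ref{NC}. Combining these gives the claimed bound $\bigl(\tau(h^\eps)\bigr)^{p-\alpha}$.

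The only mildly subtle step is the monotonicity manipulation $\tau((sh)^\eps)\lesssim \tau(h^\eps)$ on the interval $s\in[0,T]$; I expect this to be the main point to verify, but it is routine since $T$ is fixed and absorbed in the $\lesssim$ constant, and it is exactly the step already performed at the end of the proof of Lemma \ref{E_c_p_1}. Once this is in place, the rest is a direct transcription of that earlier argument, substituting the exponent $p-\alpha$ for $p$ and dropping the factor $(sh)^{-\eps}$ that no longer appears on the right-hand side.
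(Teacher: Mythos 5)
Your plan is correct and follows essentially the same route as the paper's proof: apply Lemma~\ref{int_c_p_2} pointwise, pull out $\bigl(\tau(h^\eps)\bigr)^{p-\alpha}$ via the monotonicity of $\tau$ together with Corollary~\ref{CorB7}, invoke the uniform moment bound $\E{\sup_{s\in[0,t]}|X_s|^p}\lesssim 1$ from \cite[Th.4.1.]{K17}, and use that $\int_0^t s^{-\sigma}\,ds$ is finite since $\sigma\in(0,1)$. No gaps.
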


\begin{proof}
    We use the result obtained in Lemma \ref{int_c_p_2}, the corollary from Theorem \ref{tau_scale} and that $\E{\sup_{s \in [0,t]} |X_s|^p} \lesssim 1$ (cf. \cite[Th.4.1.]{K17} ):
    \begin{align*}
        \E{\int_0^t \!\! \int_{B(s,h,\eps)} \big| c(s,X_s,z) \big|^p \, \nu(dz) \, ds}
        &\lesssim \frac{p}{p-\alpha} \big( \tau(h^\eps) \big)^{p-\alpha} \E{\int_0^t \big(1 + |X_s|\big)^p s^{-\sigma} \, ds} \\
        &\lesssim  \big( \tau(h^\eps) \big)^{p-\alpha} \int_0^t \sup_{s \in [0,t]} \E{\big(1 + |X_s|\big)^p}  s^{-\sigma} \, ds \\
        &\lesssim \big( \tau(h^\eps) \big)^{p-\alpha}.\qedhere
    \end{align*}
\end{proof}

\section{Auxiliary Theorems}

\begin{theorem}\cite[Cor.IV.4.2]{RY99}, \cite[Th.4.14]{KS23}\label{bdg} (Burkholder-Davis-Gundy Inequality)

 Let $ T $ be any stopping time and $ p \in (0, \infty) $. Then there exists a constant $ C_p^{\mathrm{BDG}} > 0 $, depending only on $ p $, such that for any continuous local martingale $ M $, the following inequality holds:
\[
\E{\sup_{0 \leq t \leq T} |M_t|^p} \leq C_p^{\mathrm{BDG}} \, \E{\langle M \rangle_T^{p/2}},
\]
where $ \langle M \rangle_T $ denotes the quadratic variation of $ M $ up to time $ T $.
\end{theorem}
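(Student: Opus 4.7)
The plan is to prove the inequality via the Dambis-Dubins-Schwarz (DDS) time-change, which reduces a general continuous local martingale to Brownian motion, together with classical maximal inequalities for Brownian motion. First I would carry out a standard localization: replace $T$ by $T \wedge \tau_n$ with $\tau_n := \inf\{t \geq 0 : \langle M\rangle_t \geq n\}$, so that the stopped process is a bounded continuous martingale, and appeal to monotone convergence on both sides of the inequality as $n \to \infty$. This removes all integrability issues and lets us proceed as if $M$ were a genuine bounded martingale with $M_0 = 0$.

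Next, by the DDS theorem (possibly after enlarging the probability space), there exists a Brownian motion $B$ such that $M_t = B_{\langle M\rangle_t}$, and in particular
\[
\sup_{0 \leq t \leq T}|M_t| = \sup_{0 \leq s \leq \langle M\rangle_T}|B_s|,
\]
where $\sigma := \langle M\rangle_T$ is a stopping time for the time-changed filtration. Hence it is enough to establish the Brownian estimate
\[
\E{\sup_{0 \leq s \leq \sigma}|B_s|^p} \leq C_p \,\E{\sigma^{p/2}}
\]
for arbitrary nonnegative stopping times $\sigma$; substituting $\sigma = \langle M\rangle_T$ then finishes the reduction.

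For the Brownian estimate I would split according to $p$. When $p \geq 1$, Doob's $L^p$-maximal inequality gives $\E{\sup_{s \leq t}|B_s|^p} \lesssim \E{|B_t|^p}$ for fixed $t$; combining this with Brownian scaling and optional stopping (applied after a further truncation $\sigma \wedge N$) yields $\E{\sup_{s \leq \sigma}|B_s|^p} \lesssim \E{\sigma^{p/2}}$. When $p \geq 2$ one can alternatively apply Itô's formula directly to $|M|^p$, estimate the resulting drift term by $\int_0^t |M_s|^{p-2} d\langle M\rangle_s$, and close via Doob plus Hölder/Young; this avoids DDS altogether and is the route taken in many textbooks. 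For $0 < p < 1$ Doob's inequality is unavailable, and I would invoke the Burkholder good-$\lambda$ inequality: for the Brownian motion $B$ and its maximal process $B^*_t := \sup_{s \leq t}|B_s|$, one shows
\[
\Prob{B^*_\sigma > \beta\lambda,\; \sigma^{1/2} \leq \delta\lambda} \leq \psi(\beta,\delta)\Prob{B^*_\sigma > \lambda}
\]
with $\psi(\beta,\delta) \to 0$ as $\delta \to 0$ and $\beta > 1$ fixed, then multiply by $\lambda^{p-1}$ and integrate to conclude.

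The only genuine obstacle is the $p < 1$ regime, where naive $L^p$-moment arguments fail and one really needs the good-$\lambda$ machinery; for $p \geq 1$ everything is routine. Since the paper cites this theorem from \cite{RY99} and \cite{KS23}, I would state the result as above and refer the reader to those sources for the full good-$\lambda$ argument, while giving the DDS reduction and the $p \geq 2$ Itô proof in detail. A useful closing remark is that the same scheme yields the reverse inequality $\E{\langle M\rangle_T^{p/2}} \leq c_p \E{\sup_{t \leq T}|M_t|^p}$ when needed, by applying the Brownian estimate to $|B_\sigma|^p$ from below.
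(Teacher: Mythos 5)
The paper does not prove this statement at all: it is quoted verbatim as a classical auxiliary result, with the proof delegated to the cited references (Revuz--Yor and the lecture notes of Kurt--Schilling type). So there is nothing in the paper to compare your argument against line by line; the relevant question is only whether your sketch is a sound reconstruction of the textbook proof. In broad strokes it is: localization via $\tau_n=\inf\{t:\langle M\rangle_t\ge n\}$ plus monotone convergence, the Dambis--Dubins--Schwarz reduction $M_t=B_{\langle M\rangle_t}$ (with the correct caveat about enlarging the space, and the correct identification $\sup_{t\le T}|M_t|=\sup_{s\le\langle M\rangle_T}|B_s|$ because $M$ is constant where $\langle M\rangle$ is), and then a maximal inequality for Brownian motion run up to the stopping time $\sigma=\langle M\rangle_T$. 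This is a legitimate and standard route, different in spirit from the It\^o-formula-only proofs but equivalent in outcome.

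Two caveats in your case analysis deserve attention. First, Doob's $L^p$ maximal inequality requires $p>1$, so your claim that ``when $p\ge 1$'' Doob handles the estimate silently excludes the hardest classical case $p=1$ (Davis' inequality); as written, $p=1$ falls through the crack between your two regimes. Second, even for $1<p<2$ the step ``Doob plus Brownian scaling plus optional stopping'' is not quite self-contained: Doob reduces $\E{\sup_{s\le\sigma}|B_s|^p}$ to $\sup_N\E{|B_{\sigma\wedge N}|^p}$, and bounding the latter by $C_p\,\E{\sigma^{p/2}}$ at a \emph{stopping time} is essentially the inequality you are trying to prove (the It\^o argument you invoke needs $p\ge2$ for $|x|^p$ to be $C^2$). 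The clean fix, which is also what the cited sources do, is to run the good-$\lambda$ argument for \emph{all} $p\in(0,\infty)$ rather than only for $p<1$; once the good-$\lambda$ estimate is in hand, integration against $p\lambda^{p-1}\,d\lambda$ gives every exponent at once, and your closing remark about the reverse inequality then also follows uniformly. With that adjustment your outline is complete and consistent with the sources the paper cites.
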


 Recall (cf. \cite{Sch16}) that a predictable sigma-algebra $\mathcal{P}$ is the  smallest sigma-algebra on $(0,\infty)\times \Omega$,
such that all left-continuous adapted stochastic processes $(t,\omega)\mapsto X_t(\omega)$   are measurable.  A stochastic process $F(t,y)$  is called predictable, if it is $\mathcal{P}$-- measurable.
\begin{theorem}\cite[Th.4.20]{KS23}\label{Novikov}
    Let $(X_t)_{t \geq 0}$ be a one-dimensional stochastic process given by
    \[
    X_t = \int_0^t \int_{y \neq 0} F(s,y) \tN(dy,ds), \quad t \geq 0,
    \]
    where $ F $ is a predictable stochastic process, and $\tN = N - \widehat{N}$ is a compensated Poisson random measure with compensator $\widehat{N}(dy,ds) = \nu(dy)ds$,
    where the measure $ \nu $ satisfies
     $\int_{y \neq 0} \min\{1, |y|^2\} \nu(dy) < \infty$.
    \begin{enumerate}
        \item  \cite{No75} If
        \[
        \mathbb{E} \left[ \int_0^T \int_{y \neq 0} |F(s,y)| \nu(dy) ds \right] < \infty,
        \]
        for some $ T > 0 $, then for any $ a \in [1,2] $ and $ p \in [0, a] \subseteq [0,2] $, we have
        \begin{equation}\label{Novikov1}
        \mathbb{E} \left( \sup_{t \leq T} |X_t|^p \right)
        \leq C_{p,a} \mathbb{E} \left[ \left( \int_0^T \int_{y \neq 0} |F(s,y)|^a \nu(dy) ds \right)^{p/a} \right].
       \end{equation}

        \item  \cite[Th.2.11]{Ku04}
        For all $ p \geq 2 $, we have
        \begin{equation}\label{Novikov2}
            \begin{split}
        \mathbb{E} \left( \sup_{t \leq T} |X_t|^p \right)
        &\leq c_p \mathbb{E} \left[ \left( \int_0^T \int_{y \neq 0} |F(s,y)|^2 \nu(dy) ds \right)^{p/2} \right] \\
        &\quad + c_p \mathbb{E} \left[ \int_0^T \int_{y \neq 0} |F(s,y)|^p \nu(dy) ds \right].
        \end{split}
        \end{equation}
    \end{enumerate}
\end{theorem}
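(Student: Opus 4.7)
The strategy is to introduce the Euler--Peano approximation $\widetilde{X}$ from \eqref{X_tilde} as an intermediate process and split, for $k=1,2$,
\begin{equation*}
\Bigl\|\sup_{t\in[0,T]}|X_t - X_t^{k,\tau}|\Bigr\|_{L^p(\Omega)}
\le \Bigl\|\sup_{t\in[0,T]}|X_t - \widetilde{X}_t|\Bigr\|_{L^p(\Omega)}
+ \Bigl\|\sup_{t\in[0,T]}|\widetilde{X}_{\eta(t)} - X_{\eta(t)}^{k,\tau}|\Bigr\|_{L^p(\Omega)}.
\end{equation*}
The first summand supplies the $n^{-\gamma\wedge 2\zeta/(p(1+\zeta))}$ rate shared by both parts of the theorem; I would invoke \cite[Lem.~3.1]{BM24}, observing that its Kunita--BDG step can be relaxed to Novikov's inequality (Theorem~\ref{Novikov}(1)) in order to cover the full range $p>1$. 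All the genuinely new work lies in the second summand, which has to be treated separately for Scheme~1 and Scheme~2.

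For $\widetilde{X}_{\eta(t)} - X_{\eta(t)}^{k,\tau}$ I would decompose into four pieces by coefficient: (A) drift, (B) diffusion, (C) large-jump integral over $\Real\setminus B(s,h,\eps)$, and (D) small-jump term over $B(s,h,\eps)$. Parts (A)--(C) are structurally identical for both schemes and are handled by routine martingale calculus: Minkowski's integral inequality plus \eqref{Lipab} for (A), continuous BDG (Theorem~\ref{bdg}) plus \eqref{Lipab} for (B), and Novikov/Kunita (Theorem~\ref{Novikov}, split at $p=2$) plus \eqref{Lipc} for (C). Writing $\Phi_n(t):=\sup_{s\le\eta(t)}\|\widetilde{X}_{\eta(s)}-X_{\eta(s)}^{k,\tau}\|_{L^p(\Omega)}$, each of these yields a feeder $\int_0^t \Phi_n^q(s)\,ds$ for $q\in\{1,2,p\}$ that is ready to enter an extended Gronwall inequality; only the constants and the coefficient weight $M_p$ from Assumption~\ref{Int} change.

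The heart of the argument, and the main obstacle, is (D). For Scheme~1 the approximation simply omits the small-jump integral, so (D) is the $L^p$-norm of the compensated small-jump integral itself, which I would bound directly: Kunita \eqref{Novikov2} combined with Lemmas~\ref{int_c_p_1} and \ref{E_c_p_1} for $p\ge 2$ (here the side condition $\sigma+\eps p/2<1$ enters to secure time integrability near zero), and Novikov \eqref{Novikov1} with $a=p$ together with Lemma~\ref{E_c_p_2} for $\max(1,\alpha)<p<2$. Choosing $h=n^{-1/\eps}$ and using the growth bound \eqref{tau2} collapses both cases to the single rate $n^{-(p^*-\alpha)/(p^*\alpha)}$ appearing in \eqref{str-eq1}. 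For Scheme~2 the small jumps are replaced by matched Gaussians $\sigma_i(X^{2,\tau}_{t_{i-1}})\zeta_i$, and (D) becomes the $L^p$-distance between a compound-Poisson-type martingale and its Gaussian surrogate. The plan here is: (i) exploit that $(S_k-\zeta_k)$ is a martingale and remove the supremum via Doob; (ii) invoke the optimal-transport theorem of Fournier \cite{Fo10} to identify the $L^p$-cost with $\sqrt{V_n}\,\Ww_p(\widetilde{S}_n,\vartheta)$ after normalization; (iii) bound this Wasserstein distance by Bobkov's integral representation $\int|F_{\widetilde{S}_n}-\Phi|^{1/p}\,dx$; (iv) control the pointwise CDF discrepancy via the martingale Esseen-type theorem to obtain a bound of order $L_{n,2\delta}^{1/(3+2\delta)}/(1+|x|^{2+2\delta})$, whose convergent $x$-integral forces the auxiliary parameter to satisfy $\delta>p/2-1$. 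Feeding the Lemmas~\ref{int_c_p_1}, \ref{E_c_p_1} into the bounds $\sum\E{|J_i^{\text{small}}|^{2+2\delta}}\lesssim \tau(h^\eps)^{2+2\delta}h^{-\eps}$ and $V_n^{\theta}\lesssim\tau(h^\eps)^{2\theta}h^{-\eps\theta}$ for $\theta=\tfrac12(1-\tfrac{2+2\delta}{(3+2\delta)p})$, choosing $h=n^{-1/\eps}$, and letting $\delta\to\infty$ yields the target rate $\tau(n^{-1})\,n^{1/2-1/(2p)+\eps^*}=n^{-(2-\alpha)/(2\alpha)-1/(2p)+\eps^*}$ of \eqref{str-eq2}.

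Assembling (A)--(D) produces an integral inequality of the form
\begin{equation*}
\Phi_n(t)\lesssim \int_0^t\Phi_n(s)\,ds + \Bigl(\int_0^t\Phi_n^2(s)\,ds\Bigr)^{1/2} + \Bigl(\int_0^t\Phi_n^p(s)\,M_p(s)\,ds\Bigr)^{1/p} + r(n),
\end{equation*}
on which an extended Gronwall argument closes the loop with $\sup_{0\le t\le T}\Phi_n(t)\lesssim r(n)$, where $r(n)$ is the scheme-specific rate derived above. The most delicate bookkeeping will occur in step (iv) for Scheme~2: coordinating the three moving parameters $(\eps,\delta,h)$ so that the auxiliary moment $L_{n,2\delta}$ is finite, the $x$-integral converges, and the arbitrary constant $\delta$ can be pushed to $+\infty$ to absorb the loss of $\eps^*\in(0,1/(2p))$ in the exponent. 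Once this is in place, Scheme~1 follows by essentially the same skeleton with (D) replaced by the direct Kunita/Novikov estimate.
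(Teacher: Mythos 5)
Your proposal does not prove the statement in question. The statement is Theorem~\ref{Novikov}, i.e.\ the maximal $L^p$ moment inequalities \eqref{Novikov1} and \eqref{Novikov2} for stochastic integrals against a compensated Poisson random measure (Novikov's inequality for $p\in[0,a]\subseteq[0,2]$ and the Kunita--BDG-type bound for $p\ge 2$). What you have written instead is an outline of the proof of Theorem~\ref{tstr}, the main strong convergence result for the two Euler schemes. Worse, your argument repeatedly \emph{invokes} Theorem~\ref{Novikov} as a black box --- in the estimation of the large-jump term (C), in the treatment of the omitted small jumps for Scheme~1, and in bounding $\sum_i\E{|J_i^{\text{small}}|^{2+2\delta}}$ for Scheme~2 --- so the proposal is circular with respect to the statement it was meant to establish. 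Nothing in your text addresses how one would actually derive \eqref{Novikov1} (e.g.\ via the elementary inequality $|\sum x_i|^a\le\sum|x_i|^a$ for $a\le 1$ applied to $|X|^a$, interpolation between the cases $a=1$ and $a=2$, and Doob's maximal inequality) or \eqref{Novikov2} (e.g.\ It\^o's formula for $|X_t|^p$ together with the BDG inequality for the discontinuous martingale and an induction on dyadic blocks of $p$, as in Kunita's original proof).

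For the record, the paper itself offers no proof of Theorem~\ref{Novikov} either: it is stated in the appendix purely as a citation of \cite[Th.4.20]{KS23}, \cite{No75} and \cite[Th.2.11]{Ku04}. So the appropriate content here would be either a pointer to those references or a self-contained derivation along the lines sketched above --- not a proof of the main convergence theorem that presupposes the inequalities.
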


\begin{theorem}[Martingale Version of the Esseen's Theorem {\cite[Th.1]{HJ88}}] \label{HJ88}
Let $ (Y_j)_{j=1}^n $ be a sequence of square-integrable martingale differences adapted to the filtration $ (\Ff_j)_{j=0}^n $,
\[
\E{Y_j} = 0, \quad \E{Y_j^2} = \sigma_j^2 < \infty, \quad
\E{Y_j \mid \Ff_{j-1}} = 0, \quad \text{and} \quad \sum_{j=1}^n \sigma_j^2 = 1.
\]

Then, for all $ x \in \Real $, the following inequality holds:
\[
\big| \Prob{S_n < x} - \Phi(x) \big|
\leq \frac{C_\delta \, L_{n, 2\delta}^{\frac{1}{3 + 2\delta}}}{1 + |x|^{2 + 2\delta}},
\]
\begin{itemize}
    \item $\Phi(x)$ is the cumulative distribution function of the standard normal distribution,
    \item $ C_\delta > 0 $ is a constant depending only on $\delta > 0$,
    \item $ S_n := \sum_{j=1}^n Y_j $,
    \item
    $
        L_{n, 2\delta} = \sum_{j=1}^n \E{|Y_j|^{2+2\delta}} \leq 1.
    $
\end{itemize}
\end{theorem}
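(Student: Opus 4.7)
The plan is to prove the non-uniform Berry--Esseen bound for martingale differences by combining a uniform Berry--Esseen estimate with a polynomially decaying tail estimate, and then interpolating between the two regimes, following the scheme of Haeusler--Joos. I treat the deviation $\bigl|\Prob{S_n < x} - \Phi(x)\bigr|$ differently for $|x|$ small (where the uniform bound wins) and $|x|$ large (where the tail bound wins).

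First I would establish the uniform estimate $\sup_x \bigl|\Prob{S_n < x} - \Phi(x)\bigr| \lesssim L_{n,2\delta}^{1/(3+2\delta)}$. Here I would apply the classical Esseen smoothing inequality to reduce to estimating $|\phi_{S_n}(t) - e^{-t^2/2}|$ on $|t|\le T$ for some truncation parameter $T$. For the martingale characteristic function I would use the telescoping identity $\phi_{S_n}(t) = \E{e^{itS_{n-1}}\,\E{e^{itY_n}\mid\Ff_{n-1}}}$ together with the conditional Taylor expansion
\[
\E{e^{itY_j}\mid \Ff_{j-1}} = 1 - \tfrac{t^2}{2}\E{Y_j^2 \mid \Ff_{j-1}} + R_j(t),
\qquad |R_j(t)| \lesssim |t|^{2+2\delta}\,\E{|Y_j|^{2+2\delta}\mid \Ff_{j-1}},
\]
which, after a truncation $Y_j = Y_j \I_{|Y_j|\le \alpha} + Y_j \I_{|Y_j|>\alpha}$ used to interpolate between the second and the $(2+2\delta)$-th conditional moment when $2+2\delta>3$, gives a character-function bound of order $|t|^{2+2\delta}L_{n,2\delta}$ plus a stabilization term for $V_n := \sum_j \E{Y_j^2\mid\Ff_{j-1}}$. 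Optimizing $T$ against $L_{n,2\delta}$ in Esseen's inequality yields the exponent $1/(3+2\delta)$.

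Second, I would prove the tail estimate $\bigl|\Prob{S_n<x}-\Phi(x)\bigr|\lesssim |x|^{-(2+2\delta)}$ for $|x|\ge 1$. The Gaussian side follows from $1-\Phi(x)\le e^{-x^2/2}\lesssim |x|^{-(2+2\delta)}$. For the martingale side I would apply Chebyshev's inequality together with a Rosenthal / Burkholder martingale moment bound
\[
\E{|S_n|^{2+2\delta}} \lesssim \Bigl(\sum_{j=1}^n \sigma_j^2\Bigr)^{1+\delta} + \sum_{j=1}^n \E{|Y_j|^{2+2\delta}} = 1 + L_{n,2\delta}\lesssim 1,
\]
using the hypotheses $\sum \sigma_j^2 = 1$ and $L_{n,2\delta}\le 1$, whence $\Prob{|S_n|\ge|x|} \lesssim |x|^{-(2+2\delta)}$. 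The interpolation step then chooses a crossover threshold $x_0$ with $x_0^{2+2\delta} \asymp L_{n,2\delta}^{-1/(3+2\delta)}$, so that on $|x|\le x_0$ the uniform bound dominates and on $|x|>x_0$ the tail bound dominates; in either case the right-hand side is bounded by $C_\delta L_{n,2\delta}^{1/(3+2\delta)}\bigl(1+|x|^{2+2\delta}\bigr)^{-1}$.

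The main obstacle will be closing the gap between the deterministic normalization $\sum_j \sigma_j^2 = 1$ used in the hypothesis and the random conditional variance $V_n = \sum_j\E{Y_j^2\mid\Ff_{j-1}}$ that naturally arises in the characteristic-function expansion: the clean comparison factor $e^{-t^2/2}$ corresponds to $V_n\equiv 1$, so one must control $\E{|V_n - 1|^q}$ in terms of $L_{n,2\delta}$, typically via a conditional fourth-moment or Rosenthal-type bound. A secondary delicate point is calibrating the truncation level in the Taylor remainder so that the contribution of the large-$|Y_j|$ part matches the $L_{n,2\delta}^{1/(3+2\delta)}$ rate after the Esseen optimization; mis-balancing these two thresholds would produce a worse exponent than the sharp $1/(3+2\delta)$ claimed.
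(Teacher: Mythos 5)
This statement is not proved in the paper at all: it is quoted as an auxiliary result directly from Haeusler--Joos \cite{HJ88}, so there is no internal proof to compare against, and your outline has to be judged on its own. Its general architecture (Esseen smoothing plus a conditional characteristic-function expansion for the uniform bound, a moment bound for the tails, then a crossover in $|x|$) is the right family of ideas, and you correctly flag the most delicate structural issue, namely the discrepancy between the deterministic normalization $\sum_j\sigma_j^2=1$ and the random conditional variance $V_n=\sum_j\E{Y_j^2\mid \Ff_{j-1}}$. Indeed, the original theorem of Haeusler--Joos carries an additional term of the form $\E{\bigl|V_n-1\bigr|^{1+\delta}}$ inside the $(\cdot)^{1/(3+2\delta)}$, and the same issue reappears in your Rosenthal step, where the martingale (Burkholder--Rosenthal) inequality produces $\E{V_n^{1+\delta}}$ rather than $\bigl(\sum_j\sigma_j^2\bigr)^{1+\delta}$.

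The genuine gap is the interpolation step: as described, it cannot close. For $|x|\ge 1$ the target bound is, up to constants, the \emph{product} $L_{n,2\delta}^{1/(3+2\delta)}\cdot |x|^{-(2+2\delta)}$ of your two ingredients, whereas a crossover argument only delivers their \emph{minimum}; since both quantities are at most $1$, one always has $\min(a,b)\ge ab$, so no choice of threshold $x_0$ works. Concretely, on $1\le |x|\le x_0$ the uniform bound $L_{n,2\delta}^{1/(3+2\delta)}$ exceeds the target by the factor $|x|^{2+2\delta}$, and on $|x|>x_0$ the Chebyshev--Rosenthal bound $|x|^{-(2+2\delta)}$ exceeds the target by the factor $L_{n,2\delta}^{-1/(3+2\delta)}$; at your crossover $x_0^{2+2\delta}\asymp L_{n,2\delta}^{-1/(3+2\delta)}$ both bounds miss by exactly $L_{n,2\delta}^{-1/(3+2\delta)}$. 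A correct proof must be genuinely $x$-dependent: for instance, truncate the $Y_j$ at a level tied to $x$ so that the truncation error is controlled by $\sum_j\E{|Y_j|^{2+2\delta}}/x^{2+2\delta}$ (this is where the factor $L_{n,2\delta}$ enters the tail regime), and bound the truncated martingale by an exponential inequality whose Gaussian-type decay in $x$ absorbs both the polynomial weight and the factor $L_{n,2\delta}^{1/(3+2\delta)}$; alternatively one needs a weighted (non-uniform) version of the smoothing inequality. That $x$-dependent mechanism is precisely what is missing from your outline.
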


\begin{theorem}[Integral Inequality for Wasserstein Distance {\cite[Cor.3.2]{B18}}] \label{bobkov_wp_bound}
Let $\mu$ and $\nu$ be probability measures belonging to the space $\mathcal{M}_p$,  of measures with finite $p$-th moments, $p\geq 1$. Suppose that $F$ and $G$ are the cumulative distribution functions  corresponding to $\mu$ and $\nu$, respectively. Then, the $p$-Wasserstein distance  $\Ww_p(\mu, \nu)$ between $\mu$ and $\nu$  satisfies the inequality:
\[
\Ww_p(\mu, \nu) \leq \int_{-\infty}^\infty \lvert F(x) - G(x) \rvert^{1/p} \, dx.
\]
\end{theorem}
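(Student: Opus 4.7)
The plan is to insert an intermediate \emph{Euler-Peano} process $\widetilde{X}_t$ (the process that keeps the full compensated Poisson measure but evaluates the coefficients at the left grid point $\eta(s)$) between the exact solution $X_t$ and each approximation $X_t^{k,\tau}$, $k=1,2$. A triangle inequality then decomposes the strong error into two pieces: the classical Euler-Peano error $\|\sup_t|X_t - \widetilde{X}_t|\|_{L^p(\Omega)}$, which by \cite[Lem.3.1]{BM24} is already of order $n^{-\gamma\wedge\frac{2\zeta}{p(1+\zeta)}}$ and yields the first summand in both \eqref{str-eq1}--\eqref{str-eq2}, and a new \emph{cutting error} $\|\sup_i|\widetilde{X}_{t_i}-X_{t_i}^{k,\tau}|\|_{L^p(\Omega)}$ that I must bound by hand.

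For the cutting error I would subtract the two stochastic integral representations and split the difference into the four summands (A)~drift, (B)~Brownian diffusion, (C)~surviving large-jump integral on $\mathbb{R}\setminus B(s,h,\eps)$, and (D)~small-jump discrepancy on $B(s,h,\eps)$. Parts (A)-(C) are identical for both schemes and do not involve the cutting mechanism: Minkowski, the continuous BDG inequality, the Lipschitz assumptions \eqref{Lipab}--\eqref{Lipc}, and Novikov/Kunita's inequality (Theorem~\ref{Novikov}), combined with Assumption~\ref{Int}, convert them into the Gronwall-ready recursion
\[
\Phi_n(t) \lesssim \int_0^t\Phi_n(s)\,ds + \Bigl(\int_0^t\Phi_n(s)^2\,ds\Bigr)^{1/2} + \Bigl(\int_0^t\Phi_n(s)^p M_p(s)\,ds\Bigr)^{1/p} + \mathbf{(D)},
\]
where $\Phi_n(t):=\sup_{r\le\eta(t)}\|\widetilde{X}_{\eta(r)}-X_{\eta(r)}^{k,\tau}\|_{L^p(\Omega)}$. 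Applying the extended Gronwall lemma then transfers any bound on (D) directly onto $\Phi_n(T)$, so the whole problem reduces to (D).

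The main obstacle is estimating (D). For Scheme~1 (omitted small jumps) the term (D) is just the $L^p$-norm of the discarded compensated integral $\int_0^{\eta(t)}\!\int_{B(s,h,\eps)}c(s,\widetilde{X}_{\eta(s)},z)\tN(ds,dz)$; I would bound it using Kunita's BDG (Theorem~\ref{Novikov}.2) for $p\ge 2$, which explains the side condition $\sigma+\eps p/2<1$ needed to keep $\int_0^T s^{-\sigma-\eps p/2}ds$ finite, and using Novikov's inequality \eqref{Novikov1} with $a=p$ for $1<p<2$. Plugging in Lemma~\ref{int_c_p_1} (or Lemma~\ref{int_c_p_2} in the sub-linear case), selecting the optimal window $h=n^{-1/\eps}$, and applying \eqref{tau2} to convert $\tau(n^{-1})$ into $n^{-1/\alpha}$ yields the rate $n^{-(p^*-\alpha)/(p^*\alpha)}$ with $p^*=p\wedge 2$, matching \eqref{str-eq1}. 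For Scheme~2 the term (D) is genuinely harder: it measures how well each block $J_i^{\text{small}}(X_{t_{i-1}}^{2,\tau})$ of truncated compensated jumps is approximated by an independent Gaussian $\sigma_i(X_{t_{i-1}}^{2,\tau})\zeta_i$ with matching variance. My strategy is to exploit the martingale structure of $S_k-\zeta_k$ via Doob's inequality to reduce to a single CLT-type distance, use Fournier's optimal transport result \cite{Fo10} to realize this as a Wasserstein-$p$ distance (hence the restriction $p>1$), and bound the latter by Bobkov's integral inequality (Theorem~\ref{bobkov_wp_bound}) in terms of $\int|F_{\widetilde S_{\rho(T)}}-\Phi|^{1/p}dx$. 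The martingale Esseen bound (Theorem~\ref{HJ88}) then controls this integral by a small power of the Lyapunov ratio $L_{\rho(T),2\delta}=V_{\rho(T)}^{-(1+\delta)}\sum_i\mathbb{E}|J_i^{\text{small}}|^{2+2\delta}$, whose numerator and denominator are bounded by Lemmas~\ref{int_c_p_1} and \ref{E_c_p_1} together with the choice $h=n^{-1/\eps}$ that produces the key $O(n)$ expected number of large jumps. This gives the rate $\tau(n^{-1})\,n^{\frac12-\frac{\delta}{(3+2\delta)p}}$; letting $\delta\to\infty$ pushes the exponent to $\frac12-\frac1{2p}+\eps^*$ for any $\eps^*\in(0,1/(2p))$, and a final use of \eqref{tau2} converts this into \eqref{str-eq2}.
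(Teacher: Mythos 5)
Your proposal does not prove the statement in question. The statement is Theorem~\ref{bobkov_wp_bound}, a self-contained fact about one-dimensional probability measures: for $\mu,\nu$ with finite $p$-th moments and distribution functions $F,G$, one has $\Ww_p(\mu,\nu)\le\int_{-\infty}^{\infty}|F(x)-G(x)|^{1/p}\,dx$. What you have written instead is an outline of the proof of the main convergence result (Theorem~\ref{tstr}): the Euler--Peano decomposition, the terms (A)--(D), Gronwall, the choice $h=n^{-1/\eps}$, and so on. In that outline Theorem~\ref{bobkov_wp_bound} appears only as a tool you \emph{invoke} in the estimation of (D) for Scheme~2 --- it is used as a black box to pass from the Wasserstein distance to the Kolmogorov-type distance controlled by the martingale Esseen bound. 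Invoking a theorem is not proving it, so as a proof of the stated result the proposal has a complete gap: no argument is offered that connects $\Ww_p(\mu,\nu)$ to the integral of $|F-G|^{1/p}$.

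For the record, the paper itself does not prove this inequality either; it cites it from Bobkov's work \cite[Cor.3.2]{B18}. A genuine proof would be entirely measure-theoretic and one-dimensional: one starts from the quantile representation $\Ww_p^p(\mu,\nu)=\int_0^1|F^{-1}(t)-G^{-1}(t)|^p\,dt$ valid on the real line, observes that the region between the graphs of $F$ and $G$ has horizontal sections of length $|F^{-1}(t)-G^{-1}(t)|$ and vertical sections of length $|F(x)-G(x)|$, and then applies a section-comparison inequality of the form $\bigl(\int_0^1|A_t|^p\,dt\bigr)^{1/p}\le\int_{\mathbb{R}}|A^x|^{1/p}\,dx$. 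None of the machinery in your proposal (Novikov/Kunita inequalities, Gronwall, Lemmas~\ref{int_c_p_1}--\ref{E_c_p_2}, the choice of $h$) is relevant to this statement. You should either supply such a direct argument or, as the authors do, cite the result.
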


\begin{theorem}[Extended Gronwall Lemma {\cite[Lem.A.1]{BM24}}] \label{gronwall}
Let $f, \ell : [0, T] \to \Real_+$ be non-decreasing functions and $g, h, k \in L^1\left([0, T], \Real_+\right)$. If for some $p > 1, \; \forall t \in [0, T]$
\[
    f(t) \leq \int_0^t g(s) f(s) \, ds
    + \left( \int_0^t h(s) f^2(s) \, ds \right)^{\frac{1}{2}}
    + \left( \int_0^t k(s) f^p(s) \, ds \right)^{\frac{1}{p}}
    + \ell(t),
\]
then
\[
    f(t) \leq 2 \, p \, \ell(t)
    \exp \left( \int_0^t \big( 2p g(s) + 2p h(s) + k(s) \big) \, ds \right).
\]
\end{theorem}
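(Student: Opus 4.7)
The plan is to reduce the mixed (linear/$L^{2}$/$L^{p}$) integral inequality to a purely linear Gronwall inequality by combining the monotonicity of $f$ with scaled Young's inequalities, and then to invoke the classical Gronwall lemma.

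\emph{Step 1 (Linearization via monotonicity).} Since $f$ is non-decreasing, $f(s)^{2}\le f(t)f(s)$ and $f(s)^{p}\le f(t)^{p-1}f(s)$ for $s\le t$. Hence
\[
    \Bigl(\int_{0}^{t}h(s)f(s)^{2}\,ds\Bigr)^{1/2}\le \sqrt{f(t)}\,\Bigl(\int_{0}^{t}h(s)f(s)\,ds\Bigr)^{1/2},
\]
\[
    \Bigl(\int_{0}^{t}k(s)f(s)^{p}\,ds\Bigr)^{1/p}\le f(t)^{(p-1)/p}\Bigl(\int_{0}^{t}k(s)f(s)\,ds\Bigr)^{1/p}.
\]

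\emph{Step 2 (Young splitting).} Apply the two-parameter Young inequality $\sqrt{xy}\le \tfrac{x}{2A}+\tfrac{Ay}{2}$ to the first expression, and the scaled Young inequality with conjugate exponents $(p/(p-1),p)$ and parameter $\lambda>0$ to the second, to obtain
\[
    \Bigl(\int h f^{2}\Bigr)^{1/2}\le \frac{f(t)}{2A}+\frac{A}{2}\int_{0}^{t}h(s)f(s)\,ds,\qquad
    \Bigl(\int k f^{p}\Bigr)^{1/p}\le \frac{(p-1)\lambda^{p/(p-1)}}{p}\,f(t)+\frac{\lambda^{-p}}{p}\int_{0}^{t}k(s)f(s)\,ds.
\]

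\emph{Step 3 (Absorption and classical Gronwall).} Substituting these bounds into the hypothesis gives an inequality of the form
\[
    f(t)\le \bigl(\delta_{1}(A)+\delta_{2}(\lambda)\bigr)f(t)+\int_{0}^{t}\bigl(g(s)+\tfrac{A}{2}h(s)+\tfrac{\lambda^{-p}}{p}k(s)\bigr)f(s)\,ds+\ell(t).
\]
I would choose $A$ and $\lambda$ jointly so that $\delta_{1}+\delta_{2}=1-\tfrac{1}{2p}$ while the coefficients of $\int hf\,ds$ and $\int kf\,ds$ come out as $h$ and $\tfrac{k}{2p}$. Then, moving the $f(t)$ contribution to the left and multiplying through by $2p$,
\[
    f(t)\le 2p\,\ell(t)+\int_{0}^{t}\bigl(2p\,g(s)+2p\,h(s)+k(s)\bigr)f(s)\,ds,
\]
and the classical linear Gronwall inequality (valid because $\ell$, and hence $2p\,\ell$, is non-decreasing) yields the claimed bound.

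\emph{Main obstacle.} The delicate point is Step~3: the Young parameters $A$ and $\lambda$ must be chosen in a coupled way so that the total coefficient of $f(t)$ is exactly $1-\tfrac{1}{2p}$ \emph{and} the linear--integral coefficients produced match the target $2pg+2ph+k$ after the final scaling; any loose choice results in a worse constant, not the clean form of the lemma. The $L^{p}$ term is particularly sensitive: the unscaled Young inequality yields $f(t)$-coefficient $(p-1)/p$, which is too large, and scaling to reduce it inflates the coefficient on $\int kf\,ds$ like $\lambda^{-p}$, so the two constraints must be balanced simultaneously. A secondary technicality is that $f(t)$ is only assumed non-negative and non-decreasing, not a priori finite; this is circumvented by applying the argument to $f\wedge N$ and letting $N\to\infty$, the bound being independent of $N$.
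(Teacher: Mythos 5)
First, note that the paper does not prove this lemma at all: it is quoted verbatim from \cite[Lem.~A.1]{BM24} as an auxiliary tool, so there is no in-paper argument to compare yours against. Your overall strategy (use monotonicity of $f$ to linearize the $L^2$ and $L^p$ terms, split off the $f(t)$ factors by scaled Young inequalities, absorb them into the left-hand side, and finish with the classical linear Gronwall lemma) is the natural route and does produce a correct inequality of the asserted \emph{form}. Steps 1 and 2 are correct as written.

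The genuine gap is exactly at the point you flag as the ``main obstacle'': the joint choice of $A$ and $\lambda$ that you postulate does not exist. Matching the coefficient of $\int_0^t h f\,ds$ to $1$ forces $A=2$, hence $\delta_1=\tfrac14$; matching the coefficient of $\int_0^t k f\,ds$ to $\tfrac{1}{2p}$ forces $\lambda^{-p}=\tfrac12$, hence $\delta_2=\tfrac{p-1}{p}\,2^{1/(p-1)}$. But $\tfrac{p-1}{p}2^{1/(p-1)}\ge 0.94$ for every $p>1$ (it equals $1$ at $p=2$ and tends to $1$ as $p\to\infty$), so $\delta_1+\delta_2>1$ for all $p>1$ and the $f(t)$ terms cannot be absorbed at all, let alone with the residual factor $\tfrac{1}{2p}$. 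The obstruction is not an artifact of this particular choice: writing $\delta=\tfrac{1}{2A}+\tfrac{p-1}{p}\lambda^{p/(p-1)}$, the three requirements $\tfrac{1}{1-\delta}\le 2p$, $\tfrac{A}{2(1-\delta)}\le 2p$, $\tfrac{\lambda^{-p}}{p(1-\delta)}\le 1$ force $\lambda^{-p}+(p-1)\lambda^{p/(p-1)}<p$, while an elementary computation shows $\min_{\lambda>0}\bigl[\lambda^{-p}+(p-1)\lambda^{p/(p-1)}\bigr]=p$, attained at $\lambda=1$. So no parameters reproduce the constants $2p\,\ell(t)$ and $2pg+2ph+k$; the absorb-and-Gronwall scheme only yields $f(t)\le C_p\,\ell(t)\exp\bigl(C_p\int_0^t(g+h+k)\bigr)$ with some larger $p$-dependent constants. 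Since the present paper only uses the lemma through $\lesssim$ bounds, such a weaker version would suffice for all applications here; but as a proof of the statement as written, the constant-matching step is false, and one should either restate the lemma with unspecified constants or defer to the proof in \cite{BM24} for the precise ones. (Your secondary truncation remark is unnecessary: $f:[0,T]\to\Real_+$ non-decreasing is automatically bounded by $f(T)<\infty$.)
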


Finally, we have the time scaling of  $\tau(t)$.
\begin{theorem}\cite[Lem.~4.3]{IKP25}\label{tau_scale}
    For $ R > 1 $, there exists a constant $ \zeta = \zeta(R) > 0 $ such that
    \[
        \tau(Rt) \leq R^{\zeta} \tau(t) \quad \text{for all } t > 0.
    \]
\end{theorem}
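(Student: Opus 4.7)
The plan is to reduce to $\tau^+$ and $\tau^-$ separately (since $\tau = \tau^+ \vee \tau^-$) and then translate the target scaling into a weak lower-scaling statement for the tail functions $N^\pm$. By Assumption~\ref{N}.1, right-continuity yields the identity $N^\pm(\tau^\pm(t)) = 1/t$ for every $t > 0$. Setting $r := \tau^\pm(t)$ and $r' := \tau^\pm(Rt)$, with $r' \ge r$ by the monotonicity of $\tau^\pm$, the bound $r' \le R^\zeta r$ is equivalent, via the generalized-inverse property, to
\[
    N^\pm(R^\zeta r) \le \tfrac{1}{R}\, N^\pm(r).
\]
So the whole problem is reframed: find $\zeta = \zeta(R) > 0$ such that dilating the argument of $N^\pm$ by $R^\zeta$ divides its value by at least $R$.

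Next I would extract this weak lower scaling from Assumption~\ref{N}.2 together with the Pareto-type bound in \eqref{tau2}. The comparability $N^\pm(r) \asymp \psi^{L,\pm}(1/r)$ lets me work with the Pruitt functions, which enjoy the built-in monotonicity $\psi^{L,\pm}(a\xi) \le a^{2}\psi^{L,\pm}(\xi)$ for $a \ge 1$. Coupling this with $N^\pm(r) \le r^{-\alpha_\pm}$ on $r \in (0,1]$ and tracking the comparability constants gives, in the small-scale regime, an estimate of the form $N^\pm(R^\zeta r) \lesssim R^{-\zeta \alpha_\pm} N^\pm(r)$. Choosing $\zeta$ large enough that $C R^{-\zeta\alpha_\pm} \le 1/R$, i.e.\ $\zeta \ge (1 + \log_R C)/\alpha_\pm$, closes the argument for $t$ small.

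For the complementary regime, when $\tau^\pm(t)$ is already near the edge of its attainable range (which occurs if $\nu$ has compact support or rapidly decaying tails), $\tau^\pm$ is bounded above and the ratio $\tau^\pm(Rt)/\tau^\pm(t)$ is trivially controlled by a constant depending only on $R$; enlarging $\zeta(R)$ absorbs this piece. Splitting the range of $t$ accordingly and then taking $\zeta$ to be the larger of the two exponents obtained for $\tau^+$ and $\tau^-$ delivers a single $\zeta(R) > 0$ working uniformly in $t > 0$, and the claim follows.

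The main obstacle I foresee is the uniformity in $t$: the polynomial control of $N^\pm$ is only postulated on $(0,1]$, whereas the scaling inequality must hold for every $t > 0$, so the delicate piece is stitching the small-$r$ polynomial behaviour of $N^\pm$ with the behaviour at larger $r$ where $\tau^\pm$ begins to saturate. A related technical point is that the comparability constants from Assumption~\ref{N}.2 propagate into $\zeta$, forcing the exponent to depend on $R$ rather than being a universal constant; this is precisely what the statement $\zeta = \zeta(R)$ in the theorem allows.
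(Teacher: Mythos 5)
Your opening reduction is sound, but the step that is supposed to produce the decay estimate for $N^\pm$ does not work, and it is precisely the non-trivial content of the lemma. You claim that the one-sided bound $N^\pm(r)\le r^{-\alpha_\pm}$ from \eqref{tau2}, together with $\psi^{L,\pm}(a\xi)\le a^{2}\psi^{L,\pm}(\xi)$, yields $N^\pm(R^{\zeta}r)\lesssim R^{-\zeta\alpha_\pm}N^\pm(r)$. Neither ingredient can deliver a \emph{ratio} bound. The Pareto-type bound gives $N^\pm(R^{\zeta}r)\le R^{-\zeta\alpha_\pm}r^{-\alpha_\pm}$, and to replace $r^{-\alpha_\pm}$ by $N^\pm(r)$ on the right you would need the matching lower bound $N^\pm(r)\gtrsim r^{-\alpha_\pm}$, which is nowhere assumed. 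The Pruitt-function inequality points in the wrong direction: through Assumption~\ref{N}.2 it translates into $N^\pm(s)\lesssim a^{2}N^\pm(as)$ for $a\ge1$, i.e.\ a \emph{lower} bound on $N^\pm(as)$, which only yields a lower bound on $\tau^\pm(Rt)/\tau^\pm(t)$, not the upper bound you need. As written, your argument fails at its central step.

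What is actually needed is a strictly positive lower scaling index, $N^\pm(ar)\le Ca^{-\beta}N^\pm(r)$ with $\beta>0$, and this must be extracted from the \emph{two-sided} comparability in Assumption~\ref{N}.2. One clean route: with $H(r):=\int_0^r uN^\pm(u)\,du$, integration by parts gives $\psi^{L,\pm}(1/r)=-N^\pm(r)+2r^{-2}H(r)$ (the boundary term vanishes under \ref{N}.2), so $c_1N^\pm(r)\le\psi^{L,\pm}(1/r)\le c_2N^\pm(r)$ sandwiches $H(r)$ between $\tfrac{1+c_1}{2}rH'(r)$ and $\tfrac{1+c_2}{2}rH'(r)$, since $H'(r)=rN^\pm(r)$. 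Integrating $H'/H\le 2/\bigl((1+c_1)r\bigr)$ gives $H(ar)\le a^{2/(1+c_1)}H(r)$ and hence $N^\pm(ar)\le\tfrac{1+c_2}{1+c_1}\,a^{-\beta}N^\pm(r)$ with $\beta=\tfrac{2c_1}{1+c_1}>0$; feeding $a=R^{\zeta}$ into your inverse-function reduction then yields the theorem with $\zeta\ge(1+\log_R C)/\beta$. Two further points: the equivalence between $\tau^\pm(Rt)\le R^{\zeta}\tau^\pm(t)$ and $N^\pm(R^{\zeta}r)\le R^{-1}N^\pm(r)$ needs a word about level sets of $N^\pm$, since the supremum in \eqref{tau_def} may sit at the right end of a plateau; and your dismissal of the large-$t$ regime as trivially controlled is only valid when $\tau^\pm$ saturates --- for very heavy tails the ratio $\tau^\pm(Rt)/\tau^\pm(t)$ need not be bounded, so either an additional tail hypothesis or a restriction of $t$ to a bounded range (which is all Corollary~\ref{CorB7} uses) is genuinely required there.
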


\begin{corollary}\label{CorB7}
    Since $\tau(t), \, t \in [0, T]$, is a monotonic non-decreasing function,
    for any $h,\eps\in (0,1)$
    \[
        \tau((th)^\eps) \le \tau(T^\eps h^\eps) \le \tau(R h^\eps) \le R^{\zeta} \tau(h^\eps) \lesssim \tau(h^\eps),
    \]
    where $R = \max(2, T^\eps)$.
\end{corollary}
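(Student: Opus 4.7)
The plan is to verify the four-step chain by invoking monotonicity of $\tau$ for the first two inequalities, the scaling bound from Theorem~\ref{tau_scale} for the third, and absorbing a horizon-dependent constant into $\lesssim$ for the last. Since $\tau$ is stated to be non-decreasing, every step reduces to a comparison of arguments except the scaling step.

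First I would observe that for $t \in [0,T]$ and $\eps \in (0,1)$ one has $(th)^\eps = t^\eps h^\eps \le T^\eps h^\eps$, by monotonicity of $x \mapsto x^\eps$ on $\Real_+$. Applying the non-decreasing function $\tau$ to both sides yields $\tau((th)^\eps) \le \tau(T^\eps h^\eps)$, which is the first inequality. For the second, I would use the definition $R := \max(2, T^\eps)$ to get $R \ge T^\eps$, hence $R h^\eps \ge T^\eps h^\eps$, and monotonicity of $\tau$ delivers $\tau(T^\eps h^\eps) \le \tau(R h^\eps)$.

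The third inequality is a direct application of Theorem~\ref{tau_scale} with argument $h^\eps$ and scaling factor $R$; the hypothesis $R > 1$ required by that theorem is secured by the choice $R \ge 2$. This produces the exponent $\zeta = \zeta(R)$ and the bound $\tau(R h^\eps) \le R^\zeta \tau(h^\eps)$. Finally, since $\eps \in (0,1)$ implies $T^\eps \le \max(1,T)$, so that $R \le \max(2,T)$ depends only on the fixed horizon $T$, the prefactor $R^\zeta$ is a numerical constant independent of $h$ and $\eps$ and can be absorbed into the generic constant hidden in $\lesssim$, giving $R^\zeta \tau(h^\eps) \lesssim \tau(h^\eps)$.

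There is no substantive obstacle here: the corollary is a bookkeeping statement that glues together monotonicity with the already-proved scaling inequality. The only care needed is confirming $R > 1$ for Theorem~\ref{tau_scale} to apply and noting that $R$ stays bounded uniformly in $\eps \in (0,1)$, so that $R^\zeta$ qualifies as an admissible constant under the paper's $\lesssim$ convention.
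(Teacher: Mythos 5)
Your proposal is correct and follows exactly the route the paper intends: the corollary's displayed chain is its own proof, with monotonicity of $\tau$ justifying the first two inequalities, Theorem~\ref{tau_scale} (applicable since $R \ge 2 > 1$) giving the third, and the boundedness of $R$ uniformly in $\eps$ letting $R^\zeta$ be absorbed into $\lesssim$. Nothing is missing.
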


\printbibliography[title={References}]

\end{document}